\documentclass[11]{amsart}

\usepackage[pdftex,bookmarks,colorlinks,breaklinks]{hyperref}
\hypersetup{linkcolor=blue,citecolor=blue}
\usepackage{parskip}
\setlength\parindent{7pt}

\usepackage[T1]{fontenc}

%
%
%

\usepackage{tikz-cd} 

\usepackage{euscript}

\newtheorem{theorem}{Theorem}
\newtheorem*{theo}{Theorem}

\newtheorem{lemma}{Lemma}

\newtheorem{remark}{Remark}
\newtheorem{example}{Example}
\newtheorem{cor}{Corollary}

\newtheorem{prop}{Proposition}

\newcommand{\s}{\EuScript{X}}

\newcommand{\p}{\mbox{\bf P}}

\newcommand{\y}{\boldsymbol{y}}
\newcommand{\e}{\mbox{\bf E}}

\newcommand{\x}{\boldsymbol{x}}
\newcommand{\z}{\boldsymbol{z}}
\newcommand{\pa}{\partial}

\newcommand{\sz}{\s^{\mathbb{Z}_+}}
\newcommand{\Z}{{\mathbb Z}}
\newcommand{\N}{{\mathbb N}}
\newcommand{\R}{{\mathbb R}}

\newcommand{\T}{\mathcal{T}}
\newcommand{\I}{\mathcal{I}}
\newcommand{\pb}{\mathcal{PB}}
\newcommand{\tb}{\mathcal{TB}}
\newcommand{\cal}[1]{{\mathcal #1}}
\newcommand{\be}[1]{\begin{equation}\label{eq:#1}}
\newcommand{\ee}{\end{equation}}
\newcommand{\hpt}{\widehat{p^\tau}}
\newtheorem{assum}{Assumption}
\def\ol{\overline}

\begin{document}

\title{On Transformations of Markov Chains and Poisson Boundary}

\author{Iddo Ben-Ari}
\address{Department of Mathematics, University of Connecticut,  USA}
\email{iddo.ben-ari@uconn.edu}
\author{Behrang Forghani}
\address{Department of Mathematics, Bowdoin College, USA}
\email{bforghan@bowdoin.edu}
\keywords{Markov chains,  Stopping times, Harmonic functions, Martin boundary, Poisson boundary}
\maketitle

\begin{abstract}
A discrete-time Markov chain can be transformed into a new Markov chain by looking at its states along iterations of an almost surely finite stopping time. By the optional stopping theorem, any bounded harmonic function with respect to the transition function of the original chain is harmonic with respect to the transition function of the transformed chain. The reverse inclusion is in general not true. Our main result provides  a sufficient condition on the stopping time which guarantees that the  space of bounded harmonic functions for the transformed chain embeds in the space of bounded harmonic sequences for the original chain.  We also obtain a similar result on positive unbounded harmonic functions, under some additional conditions. Our work was motivated by and is analogous to \cite{Forghani-Kaimanovich2016}, the well-studied case when the Markov chain is a random walk on a discrete group.
\end{abstract}

\section{Introduction}
The classic Poisson formula naively says that a harmonic function on the unit disk in the complex plane, that is a function whose Laplacian vanishes, can be represented as an integral transform of its values on the  boundary of the disk. The integral transform is with respect to a kernel known as the Poisson kernel.  Probabilistically, the Poisson kernel is the distribution of the position where Brownian motion exits the open disk. The idea of representing a harmonic function as an integral transform of its boundary values extends beyond Laplacian to other contexts. In particular, in the theory of Markov chains, the study of notion of Poisson formula  goes back to the works of  Blackwell \cite{Blackwell1955} and Feller \cite{Feller1956}. 

Let $\s$ be an infinite, countable set. This will serve as our state space. Let $p:\s \times \s \to [0,1]$  be a transition function, that is $\sum_y p(x,y)=1$ for all $x\in \s$.   A function $f:\s \to \R$ is $p$-harmonic if $\sum_y p(x,y) |f(y)|<\infty$ for all $x\in \s$ and it also satisfies the  mean value property
$f(x)=\sum_y p(x,y)f(y)$ for all $x\in\s$.  Note that the constant functions are bounded and  $p$-harmonic, and that the set of $p$-harmonic functions is a linear space. It is easy to see that the space of bounded $p$-harmonic functions is a Banach space with respect to the  sup-norm.  By Rohlin's theory of measurable partitions  \cite{Rohlin52} and Doob's theory of martingales \cite{Doob53}, there exists a probability space called the Poisson boundary such that its $L^\infty$ as a Banach space is isometrically isomorphic to the space of bounded $p$-harmonic functions, see \cite{Kaimanovich1996}.

There are  extensive developments of the theory of Poisson boundaries whenever the state space $\s$ has some special structure, e.g. a group, a Riemannian manifold,  see \cite{Furstenberg1963}, \cite{Kaimanovich2002}, \cite{Kaimanovich-Woess2002} and the references therein. 

Furstenberg \cite{Furstenberg1973} showed that the Poisson boundary of a random walk on a group is isomorphic to the induced random walk to a recurrent subgroup. Later, Kaimanovich \cite{Kaimanovich83}, Muchnik \cite{Muchnik2006}, Willis \cite{Willis1990} provided more constructions on a random walk on group that preserve the Poisson boundary. The most general method up to date to construct random walks on groups with a common  Poisson boundary was recently introduced  by  Kaimanovich and the second author in \cite{Forghani-Kaimanovich2016}, \cite{Behrang2016}.  Their method is based on applying a randomized stopping time to the space of sample paths to obtain a new random walk with identical  Poisson boundary.  A crucial step in the proof is that each countable group can be viewed as a quotient space of some free semigroup. Hence the proofs are  applicable only  to random walks on countable groups. This approach was employed to study the space of positive harmonic functions on a countable group with respect to a stopping time \cite{Forghani-Mallahi2016}.

In this paper, we consider the case when the state space has \emph{no additional structure}.  Given a Markov chain $\x = (x_0,x_1,\dots)$ on a countable state space  $\s$ with transition function $p$ and a stopping time $\tau$ which is almost surely finite (see equation \eqref{eq:finiteness}), 
we consider the process obtained by looking at the Markov chain corresponding to $p$ along iterations of the stopping time, a sequence we denote by $\langle \tau (\x) \rangle$.   Through the strong Markov property, see \cite{Revuz1984}, this  process is a Markov chain on $\s$ which we call the transformed chain, and whose transition function we denote by $p^\tau$, given by $p^\tau (x,y) = \p_x (x_\tau = y)$.  We say  $\tau$ can be asymptotically recovered (Assumption~\ref{as:asymp}) when there exists a positive integer-valued map $\rho$ on the space of sample paths such that  
$$
\displaystyle\limsup_{n\to\infty}  \inf_{x} \p_x \bigg(n+\rho (x_n,x_{n+1},\cdots) \in \langle \tau(\x) \rangle\bigg)=1.
$$

A simple example for a stopping time that can be asymptotically recovered is that of a hitting time. More examples are given in  Section \ref{sec:examples}.

We investigate how the Poisson boundary (bounded harmonic functions) of the original and transformed Markov chains are related. We do this by showing the intuitively clear fact that  if the stopping time can be asymptotically recovered  (Assumption~\ref{as:asymp}), then the space of bounded harmonic functions for the transformed process is  embedded in the space of space-time harmonic functions for the original chain.  This is the statement  of our main result,  Theorem \ref{th:newmain}:
\begin{theo}
 Let  $p$ be  a transient  transition function on $\s$, and suppose that $o\in \s$ is such that all $x\in \s-\{o\}$ are accessible from $o$.  Let $\tau$ be a stopping time for which is finite a.s. under any initial distribution and can  be   asymptotically recovered (Assumption~\ref{as:asymp}).
Then for any positive bounded $p^\tau$--harmonic function $u$ there exists an extension $\bar{u}$ to   $\s\times \Z_+$,  ${\bar u}(x,0)=u(x)$, such that
  \begin{enumerate} 
 \item  $\bar{u}$ is a positive bounded $p$-harmonic sequence. 
 \item $\|{\bar u} \|_\infty =\|u\|_\infty$. 
 \end{enumerate} 
 \end{theo}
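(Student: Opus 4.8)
My plan is to build $\bar u$ from the ``boundary value'' of the transformed chain, i.e.\ the almost sure limit of $u$ along the iterates of $\tau$. Write $\tau_0:=0$ and, recursively, $\tau_{k+1}:=\tau_k+\tau\circ\theta_{\tau_k}$, where $\theta_m$ is the time-shift $(x_0,x_1,\dots)\mapsto(x_m,x_{m+1},\dots)$, so that $\langle\tau(\x)\rangle=\{\tau_0,\tau_1,\dots\}$; by the strong Markov property and the hypothesis that $\tau$ is a.s.\ finite under every initial distribution, each $\tau_k$ is a.s.\ finite under every $\p_x$ and $\tau_k\uparrow\infty$. The transformed chain is $y_k:=x_{\tau_k}$. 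If $u$ is positive, bounded and $p^\tau$-harmonic, then $M_k:=u(x_{\tau_k})=u(y_k)$ is a nonnegative martingale for $\big(\sigma(y_0,\dots,y_k)\big)_k$ bounded by $\|u\|_\infty$, so it converges a.s.\ and in $L^1(\p_x)$ to a limit $u_\infty$ with $0\le u_\infty\le\|u\|_\infty$ and $\e_x[u_\infty]=M_0=u(x)$ for every $x$. I will then \emph{define} $\bar u$ on $\s\times\Z_+$ by
\[
\bar u(x_n,n):=\e_x\big[u_\infty\mid\F_n\big],\qquad \F_n:=\sigma(x_0,\dots,x_n),
\]
and the whole difficulty is to show that the right-hand side is a function of the pair $(x_n,n)$ alone, independent of the starting state $x$.

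The claim to establish --- the only place Assumption~\ref{as:asymp} enters --- is that $\e_x[u_\infty\mid\F_n]$ is $\sigma(x_n)$-measurable and that its value on $\{x_n=z\}$ depends only on $(z,n)$. To prove it I would take the map $\rho$ of Assumption~\ref{as:asymp} and a sequence $(n_j)\uparrow\infty$ along which $\inf_x\p_x\big(n_j+\rho(x_{n_j},x_{n_j+1},\dots)\in\langle\tau(\x)\rangle\big)\to 1$, and first show $u\big(x_{\,n_j+\rho\circ\theta_{n_j}}\big)\to u_\infty$ in $L^1(\p_x)$ for every $x$. On the event $G_j:=\{\,n_j+\rho\circ\theta_{n_j}\in\langle\tau(\x)\rangle\,\}$ this random variable equals $M_{K_j}$, where $\tau_{K_j}=n_j+\rho\circ\theta_{n_j}>n_j$; since only finitely many iterates $\tau_k$ lie below any fixed level, $N_j:=\#\{k\ge 1:\tau_k\le n_j\}\to\infty$ a.s., $K_j>N_j$ on $G_j$, and hence $\mathbf{1}_{G_j}\,|M_{K_j}-u_\infty|\le\sup_{k\ge N_j}|M_k-u_\infty|\to 0$ a.s.; off $G_j$ the variables are bounded by $\|u\|_\infty$ while $\p_x(G_j^c)\to 0$, and these two facts give the $L^1$-convergence. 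Next, for $n_j\ge n$ the variable $u\big(x_{\,n_j+\rho\circ\theta_{n_j}}\big)$ is measurable with respect to $\sigma(x_{n_j},x_{n_j+1},\dots)\subseteq\sigma(x_n,x_{n+1},\dots)$, so by the Markov property $\e_x\big[u(x_{\,n_j+\rho\circ\theta_{n_j}})\mid\F_n\big]=\e_x\big[u(x_{\,n_j+\rho\circ\theta_{n_j}})\mid x_n\big]$, which on $\{x_n=z\}$ equals $\e_z\big[u(x_{\,(n_j-n)+\rho\circ\theta_{n_j-n}})\big]$ --- a number depending only on $(z,n,n_j)$ and not on $x$. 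Letting $j\to\infty$ and using $L^1$-convergence of conditional expectations proves the claim and, at the same time, produces the function $\bar u(\cdot,n)$; in particular $\bar u(z,0)=\e_z[u_\infty]=u(z)$ because $\p_z(x_0=z)=1$. For the (possibly empty) set of pairs $(z,n)$ not visited at time $n$ by any trajectory, $\bar u$ is unconstrained and I would complete it by a harmonic extension off the set of visited pairs (which is forward-closed under the space-time dynamics); here the transience of $p$ and the accessibility of every state from $o$ are used, but only as bookkeeping that affects neither positivity nor the sup-norm.

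Granting the claim, the rest is short. Since $\e_x[u_\infty\mid\F_{n+1}]=\bar u(x_{n+1},n+1)$ is $\sigma(x_{n+1})$-measurable, the tower property followed by the Markov property gives
\[
\bar u(x_n,n)=\e_x[u_\infty\mid\F_n]=\e_x\big[\e_x[u_\infty\mid\F_{n+1}]\mid\F_n\big]=\e_x\big[\bar u(x_{n+1},n+1)\mid\F_n\big]=\sum_y p(x_n,y)\,\bar u(y,n+1)
\]
$\p_x$-a.s.; evaluating the two $\sigma(x_n)$-measurable sides at $x_n=x$ shows $\bar u(x,n)=\sum_y p(x,y)\bar u(y,n+1)$ at every visited $(x,n)$, hence --- after the extension above --- everywhere on $\s\times\Z_+$, so $\bar u$ is a bounded $p$-harmonic sequence with $\bar u(\cdot,0)=u$. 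Positivity ($\bar u\ge 0$) is immediate from $u_\infty\ge 0$. Finally $|\bar u(x,n)|=|\e_x[u_\infty\mid\F_n]|\le\|u_\infty\|_\infty\le\|u\|_\infty$, so $\|\bar u\|_\infty\le\|u\|_\infty$, while $\|\bar u\|_\infty\ge\sup_x|\bar u(x,0)|=\|u\|_\infty$; hence $\|\bar u\|_\infty=\|u\|_\infty$.

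The step I expect to be the main obstacle is exactly the measurability claim above. Without a hypothesis of the strength of Assumption~\ref{as:asymp}, $\e_x[u_\infty\mid\F_n]$ genuinely depends on the whole past $x_0,\dots,x_n$ and not just on $(x_n,n)$ --- heuristically it records the ``phase'' of the $\tau$-clock at time $n$, which is not determined by the current state and time; asymptotic recoverability is precisely the tool that replaces $u_\infty$, in the limit, by the future-measurable surrogates $u(x_{\,n_j+\rho\circ\theta_{n_j}})$, and the accompanying inequality $\tau_{K_j}\ge n_j\to\infty$ is what forces those surrogates to converge to $u_\infty$ itself rather than to some early value $M_k$ of the transformed martingale. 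I would also keep two routine points in view: that $u_\infty$ is $\F_\infty$-measurable (clear, as an a.s.\ limit of $\F_{\tau_k}$-measurable variables), so all the conditional expectations are legitimate; and that the values of $\bar u$ arising from different admissible starting states agree on overlaps, which holds because the limiting expression $\e_z\big[u(x_{\,(n_j-n)+\rho\circ\theta_{n_j-n}})\big]$ does not involve the start.
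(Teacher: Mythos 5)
Your proof is correct, and it takes a genuinely different route from the paper's official argument, which runs through the Martin boundary: the paper extends the state space to $\s^*$, restricts $p^{\tau}$ to $\widehat\s$, represents $u$ by an integral of the Martin kernel $K^{\widehat{p^\tau}}$ against $f\,d\mu_{\bf 1}$, approximates $f$ by simple functions, converts boundary events of the transformed chain into tail events of the original chain (Lemma~\ref{lem:inv_to_tail}, via a Borel--Cantelli argument with the recovery times $\rho_{i,j}$), controls sup-norms with Lemma~\ref{lem:supremum}, and passes to a pointwise limit. You instead work purely probabilistically: bounded martingale convergence of $u(x_{\tau_k})$ gives $u_\infty$, Assumption~\ref{as:asymp} supplies the future-measurable surrogates $u(x_{n_j+\rho\circ S^{n_j}})$ converging to $u_\infty$ in $L^1(\p_x)$ uniformly over starting states, and the Markov property then forces $\e_x[u_\infty\mid\F_n]$ to coincide a.s.\ with a function of $(x_n,n)$ alone, from which the harmonic-sequence property and the norm identity follow by the tower property. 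This is essentially the referee's alternative proof sketched in the Remark after the paper's proof of Theorem~\ref{th:newmain}, carried out in detail and phrased as a direct construction of $\bar u$ rather than as an embedding of $H^\infty(\s,p^\tau)$ into $L^\infty(\mathcal{TB}(\s,p))$; your $L^1$ argument even bypasses the Borel--Cantelli subsequence extraction. What each approach buys: yours is more elementary and, as you can check, never uses transience or accessibility from $o$ (Assumption~\ref{as:required} enters the paper only to make the Martin kernel available); the paper's route builds machinery (Lemma~\ref{lem:inv_to_tail} and the Martin representation) that is reused for Theorem~\ref{th:martin} on unbounded positive harmonic functions, which the bounded-martingale argument alone does not reach.

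One point deserves more care than your phrase ``unconstrained'': at space-time pairs $(z,n)$ not reachable at time $n$ from any initial state, the values of $\bar u$ are not completely free, since the mean-value identity at such a pair still couples them to values at later (possibly again unreachable) pairs, and the unreachable region can be absorbing for the space-time chain. The fix is routine and stays within your framework: on the unreachable set define $\bar u(z,n)=\e_z\bigl[\bar u(x_T,n+T)\,{\bf 1}_{\{T<\infty\}}\bigr]$, where $T$ is the first time the space-time chain enters the reachable set (which is forward-closed, so the values there are already consistently defined); first-step analysis gives the mean-value property on the unreachable set, and the values stay in $[0,\|u\|_\infty]$, so positivity and $\|\bar u\|_\infty=\|u\|_\infty$ survive. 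With that two-line completion, and the standing convention $\tau\ge 1$ (so that $\tau_k\uparrow\infty$, which the paper also uses implicitly), your argument is complete; note that for $n=0$ every $(z,0)$ is reachable, so $\bar u(\cdot,0)=u$ holds on all of $\s$ without any extension.
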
 
 We note that in general, and unlike the case of random walks on groups, the Poisson boundaries of the original and transformed  chains may be fundamentally  different, see the example in Section \ref{splitting}.   In Theorem \ref{th:martin} we extend the scope to positive harmonic functions under some additional conditions. 

Our proof of the theorem is based on the construction of the \emph{Martin boundary}, which is one the main qualitative space in boundary theory and potential theory associated to Markov chains. The Martin boundary of a Markov chain is the  topological counterpart of the Poisson boundary which is responsible for representation of positive harmonic functions. If the Martin boundary as a Borel space equipped with an appropriate probability measure, then it is isomorphic (as a measure space) with the Poisson boundary, see \cite{Dynkin69}. 

The organization of the paper is as follows.  In Section \ref{sec:preliminaries} we recall the theory of  Poisson, tail, and Martin boundaries. Section 3 devoted to constructing Markov chains via transformation.  In  Section 4 we show how the tools from the theory of Martin boundary can be applied to the transformed Markov chains via stopping times. 
Our main result is  proved in Section \ref{sec:main}, and in Section \ref{sec:examples} we present a number of examples. Two standard approximation results used in the proof of Theorem \ref{th:martin} are  proved in the Appendix.

\subsection*{Acknowledgements}
We would like to thank the anonymous referee for reading  the manual very carefully and suggesting improvements.

\section{Preliminaries}\label{sec:preliminaries} 
\subsection{Markov chains}
\label{sec:MC_defn} 
Let $\s$ be a countable set. The set $\s$ and its power set form a measurable space.  Let $\s^{\Z_+}=\{\x=(x_0,x_1,\dots):x_n \in \s,n\in\Z_+\}$, the set of $\s$-valued sequences indexed by $\Z_+$. For every $n\in\Z_+$ define the coordinate function $\omega_n(\x)=x_n$. Denote by $\cal{F}_k^\infty(\s)=\sigma(\omega_k,\omega_{k+1},\cdots)$ the sigma-algebra generated by the coordinate functions $\omega_i,~i\ge k$. Let ${\cal F^\infty}=\cal F^\infty_0(\s)$.
The measurable  space  $(\sz,{\cal F^\infty})$ is called the  space of sample paths. For our work, we will also need to define an auxiliary process  $\z=(z_n:n\in \Z_+)$, as follows. Given $\x$ and $t \in\Z_+$, we let $ z_n = (x_n,t+n)$. That is, $\z$ also keeps track of time. 

Let $p$ be a transition function on $\s$. That is $p:\s \times \s \to [0,1]$ and $\sum_y p(x,y) =1$ for all $x\in \s$. Let $m$ be a probability measure on $\s$.  For $n\in \Z_+$, define the $n$-th iteration of $p$, denoted by $p^n$, through 
\begin{equation}
\label{eq:p_iterated} 
p^0(x,y) = \delta_{x}(y), \mbox{ and }p^{n+1} (x,y) = \sum_z p(x,z) p^n (z,y)
\end{equation}
(note that $p^1=p$, and that $p^n$ is also a transition function). By Kolmogorv's extension theorem, there exists a unique probability measure  $\p_m$ on the space of sample paths satisfying
$$
\p_m(a_0,a_1,\cdots,a_n)=m(a_0)p(a_0,a_1)\cdots p(a_{n-1},a_n)
$$ 
where $(a_0,a_1,\cdots,a_n)=\{\x\in\sz\ :\ \omega_i(\x)=a_i,\ i=0,\cdots,n\}$. The probability measure 
$m$ is usually referred to as the initial distribution under $\p_m$. As usual, we write $\e_m$ for the expectation operator associated with $\p_m$, also for $x\in \s$ we abbreviate and write $\p_x,\e_x$ instead of $\p_{\delta_x},\e_{\delta_x}$. Note then that 
$$
\p_m=\sum_x m(x)\p_x,\ \ \ \  \e_m=\sum_xm(x)\e_x.
$$
The triple $(\s,p,m)$ is called a Markov chain on the state space $\s$ with the transition function $p$ and the initial distribution $m$.
\subsection{Harmonic functions}
Suppose that $f:\s \to \R$ satisfies $\sum_y p(x,y) |f(y)|< \infty$ for all $x\in \s$. Then we can define a function $pf:\s\to \R$ through 
$$pf(x)=\sum_yp(x,y)f(y).$$ 
If $pf =f$, then $f$ is called $p$--harmonic.  We denote the set of all bounded $p$--harmonic functions and the set of all positive $p$--harmonic functions by $H^{\infty}(\s,p)$ and $H_{+}(\s,p)$, respectively.  We also write $H^\infty_+(\s,p)$ for the convex cone of bounded positive harmonic functions.  
\subsection{Harmonic sequences}\label{Z}
A sequence of functions $(f_n:n\in \Z_+)$, where  $f_n: \s\to\R$, is called a $p$--harmonic sequence whenever 
$$pf_{n+1}=f_n.$$
The space $p$--harmonic sequence is denoted by $S(\s,p)$, while the subspace of  bounded $p$--harmonic sequences, and nonnegative $p$--harmonic  sequences are denoted by  $S^{\infty}(\s,p)$ and  $S_+(\s,p)$, respectively.  If $f$ is a nonnegative $p$--harmonic, then  $(f,f,\dots)$ is a nonnegative $p$--harmonic sequence.  Harmonic sequences sometimes are called space--time harmonic functions. Indeed, given a $p$--harmonic sequence $(f_n:n\in\Z_+)$, define a function $f:\s\times \Z_+\to \R$ by letting $f(x,n)=f_n(x)$.  Now if one defines a transition function  $p^+$ on $\s\times \Z_+$ by letting
$$p^+((x,n),(y,n+1))=p(x,y),$$
then $p^+ f = f$. Conversely, if $f$ is $p^+$--harmonic, then $(f(x,0),f(x,1),\dots)$ is a $p$-harmonic sequence. In terms of notation, ${\cal S}(\s,p)=H(\s\times \Z_+,p^+)$. For $(x,t)\in \s\times\Z_+$, write  $\p_{x,t}$ for the probability measure on the space of sample paths on $\s\times \Z_+$ induced by  $p^+$ with initial distribution $\delta_{x,t}$.  A sample path in that space will be written as  $\boldsymbol z=(z_0,z_1,\cdots)$.
\subsection{Poisson boundary}\label{sec:Poisson}
Let $S:\sz\to\sz$ be the time-shift, that is 
$$
S(x_0,x_1,\cdots)=(x_1,x_2,\cdots),
$$
and for $k\ge 1$ write $S^k$ for the $k$-th iteration of $S$, i.e, $S^k (x_0,x_2,\cdots) =(x_k,x_{k+1},\cdots)$.
The sigma-algebra  $\I=\{A\in\mathcal F^\infty\ :\ S^{-1}A=A\}$ is called the  invariant sigma-algebra. Let $\theta$ be a probability measure on $\s$ with full support, that is $\theta(x)>0$ for all $x\in \s$. Let $\overline{\I}(\s,p)$ denote the completion of ${\cal I}$ with respect to the probability measure $\p_{\theta}$. As can be easily seen, this completion is  equivalent to requiring that whenever $A\subseteq \sz$ is such that $A\subseteq B$ for some $ B \in{\cal F^\infty}$, satisfying $\p_x (B) = 0$ for all $x\in \s$, then $A$ is measurable with respect to the completion. Therefore $\overline{\cal I}(\s,p)$ is independent of the particular choice of $\theta$.   The Poisson boundary is defined as the  restriction of $(\sz,\cal F^\infty,\p_{\theta})$ to  $\overline{\I}(\s,p)$.  Denote the Poisson boundary with respect to the transition function $p$ as $\mathcal{PB}(\s,p):=(\s^{\Z_+}, \overline{\I}(\s,p), \p_\theta)$.  As an ergodic theoretic object, the Poisson boundary is identified with  the space of ergodic components of the time-shift on the space of sample paths \cite{Kaimanovich91}.  

The Poisson boundary identifies the space of bounded harmonic functions. More precisely, let $f$ be a bounded  $p$--harmonic function, define 

\begin{equation}\label{eq:martin-approach}
\begin{array}{rcl}
H^\infty(\s, p) &\longrightarrow &L^\infty(\mathcal{PB}(\s,p))\\
f  &\longmapsto&  \phi_f(\x):= \displaystyle\lim_{n\to\infty} f(x_n)  \ \mbox{ a.e.} 
\end{array}
\end{equation}

Because $f$ is a bounded $p$--harmonic function, the sequence $(f(x_n))_{n}$ is $\p_\theta$--martingale, therefore $\phi_f(x)$ almost surely exists. Because $\overline{\I}(\s,p)$ is a complete sigma--algebra, $\phi_f$ is measurable with respect to the probability space $\pb(\s,p)$.  We can define the inverse as follows.
\begin{equation}\label{eq: bounded-infinity}
\begin{array}{rcl}
L^\infty(\pb(\s,p)) &\longrightarrow &H^\infty(\s, p) \\
\phi  &\longmapsto&  f_\phi(x):= \int \phi(\x) \ d\p_x(\x)
\end{array}
\end{equation}
Moreover the definitions of $\phi_f$ and $f_\phi$ imply that $\|f\|_\infty=\|\phi_f\|_\infty$, hence $H^\infty(\s,p)$
is isometrically isomorphic to $L^\infty(\pb(\s,p))$, see also Corollary~\ref{cor:isometry}.

\subsection{Tail boundary}
Consider the  tail sigma--algebra $\T=\displaystyle\bigcap_{k=0}^{\infty}S^{-k}(\cal F^\infty)$  on the space of sample paths. The completion of $\T$ with respect to $\p_\theta$, where $\theta$ is as in the last paragraph, is denoted by $\ol\T(\s,p)$. The tail boundary is the  restriction of $(\sz,\cal F,\p_{\theta})$  to the sigma-algebra  $\ol\T(\s,p)$. The tail boundary associated with the transition function $p$ is denoted by $\tb(\s,p):=(\s^{\Z_+}, \overline{\T}(\s,p), \p_\theta))$. Similarly to the Poisson boundary, the space of bounded  $p$--harmonic sequences $S^\infty(\s,p)$ is isometrically isomorphic to $L^\infty(\tb(\s,p))$:
$$
\begin{array}{rcl}
S^\infty(\s, p) &\longrightarrow &L^\infty(\tb(\s,p))\\
F=(f_n)_n  &\longmapsto&  \psi_F(\x):= \displaystyle\lim_{n\to\infty} f_n(x_n)\ a.e
\end{array}
$$
Using the fact that the space of bounded $p^+$--harmonic functions can be viewed as the space of bounded $p$--harmonic functions implies $H^\infty(\s\times \Z_+,p^+)$ is isometrically isomorphic to $L^\infty(\tb(\s,p))$. On the other hand, $H^\infty(\s\times \Z_+,p^+)$ is isometrically isomorphic to 
$L^\infty(\pb(\s\times\Z_+,p^+))$. Summarizing: we have $L^\infty(\tb(\s,p))$  is isometrically isomorphic to $L^\infty(\pb(\s\times\Z_+,p^+))$. Therefore,  the tail boundary associated with $p$ is isomorphic (as a probability space) to the Poisson boundary associated with $p^+$, for more details see \cite{Ka92,Kaimanovich1996}

$$ 
 \begin{tikzcd}
H^\infty(\s\times\Z_+,p^+)\arrow[r, "\cong"]  & S^\infty(\s,p)  \dar{\cong} \\  
L^\infty(\pb(\s\times\Z_+,p^+)) \arrow[u, "\cong"]   & L^\infty(\tb(\s,p))
\end{tikzcd}
$$

 Viewing the Poisson boundary as representing the  bounded harmonic functions,  the tail boundary as representing the  bounded harmonic sequences, and using the fact that  every harmonic function uniquely extends to a harmonic sequence, we can think of the Poisson boundary as a subset on the tail boundary. We will not get into any details here.   However, we have the following:

$$ 
 \begin{tikzcd}
H^\infty(\s,p)\arrow[r, hook]  & S^\infty(\s,p)  \dar{\cong} \\  
L^\infty(\pb(\s,p)) \arrow[u, "\cong"]   & L^\infty(\tb(\s,p))
\end{tikzcd}
$$
 
 A transition function of a Markov chain is called steady whenever the tail boundary  coincides mod $\p_x$ with  the Poisson boundary  for any $x$ in $\s$, and, in particular, all bounded harmonic sequences are bounded harmonic functions.  The ``0-2'' law  determines whether a transition function is steady, see  \cite{Ka92} for more details. Here is one sufficient condition: 
 
\begin{example}[\cite{Ka92}]
For $x\in \s$, let  $g_x = \inf \{n\ge 1: p^n(x,x)>0\}$. Then $p$ is steady if the greatest common divisor of $\{g_x>0:x\in \s\}$ is $1$.  
\end{example}

\subsection{Martin boundary}
\label{sec:martin_boundary}
The relation between the Poisson and the tail boundaries to the invariant and tail sigma-algebras allow  to characterize the space of bounded harmonic functions and bounded harmonic sequences, respectively.  We now introduce the Martin boundary, a topological boundary also used to characterize positive harmonic functions (Theorem \ref{th:doob59} below), and which is more suitable for our purposes.  We comment that the Poisson boundary can be identified as a subset of the Martin topology equipped with an appropriate probability measure \cite{Kaimanovich1996}, \cite{Sawyer1997}, and \cite{Woess09}, also see Corollary \ref{cor:isometry} below.  One can refer to \cite{Derriennic1976}, \cite{Kaimanovich1996}, \cite{Sawyer1997} and  \cite{Woess09} for the construction of Martin boundary.  In this section, we remind the reader about the definition of the Martin boundary and related results which will be used  later.

Let $p$ be a transition function on $\s$.  The  transition function is called transient whenever the Green's function $G^p(x,y)=\sum_{n\geq0}p^n(x,y)$ is finite for all $x,y\in \s$, $p^n$ is the $n$-th iteration of $p$, defined in \eqref{eq:p_iterated}. 

We always make the following assumption on $p$: 
\begin{assum}~
\label{as:required} 
\begin{enumerate}
\item $p$ is transient. 
\item There exists a state $o$ such that all $y \in \s-\{o\}$ are accessible from $o$:  
$$G^p(o,y)>0\mbox{ for all }y\in \s.$$  
\end{enumerate}
\end{assum} 

We also define  the {\it Martin kernel} on $\s\times \s$
$$K^p(x,y)=\begin{cases} \frac{G^p(x,y)}{G^p(o,y)} & G^p(o,y)>0 \\ 0 & \mbox{otherwise}\end{cases}$$ 
 The  {\it Martin compactification} of $\s$ is the topological space $M(\s,p)$

 satisfying the following requirements: 
 \begin{enumerate}
 \item Every singleton $\{x\},~x\in \s$, is open. 
 \item $\s$ is dense. 
\item For  $x\in \s$, the function $K^p(x,\cdot)$ extends to a continuous function on $M(\s,p) $, and the  set of extensions separate points in $M(\s,p) \backslash \s$. 
\end{enumerate}
These  requirements uniquely determine a compact topological space (up to homeomorphism). Furthermore, the resulting space is metrizable \cite{Woess09}. The compact topological space $\partial(\s,p)=M(\s,p) \backslash \s$ is called the \emph{Martin boundary} of the Markov chain with respect to the transition function $p$. 

The \emph{minimal Martin boundary} is the Borel subset $\partial_m(\s,p)$ of $\partial(\s,p)$  consisting of all $\xi$ satisfying 
\begin{enumerate} 
\item $K^p(\cdot, \xi) \in H_+(\s,p)$. 
\item $K^p(\cdot,\xi)$ is {\it minimal harmonic}: if $u\in H_+(\s,p)$ and  $u \le K^p(\cdot,\xi)$, then  $u = c K^p(\cdot, \xi)$ for some $c\le 1$. 
\end{enumerate} 
\begin{theorem}\cite{Doob59}\label{th:doob59}
Let $u\in H_+(\s,p)$. Then  there exists a unique finite measure $\mu_u$ on the Borel sigma-algebra on $\partial_m(\s,p)$ such that 
$$
u(x)=\int_{\pa_m(\s,p)}K^p(x,\xi)d\mu_u(\xi).
$$
\end{theorem}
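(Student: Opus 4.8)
The plan is to realise $u$ as an integral of Martin kernels in two moves: first exhibit \emph{some} representing measure, most transparently by applying Choquet's theorem to the compact cone of normalised positive superharmonic functions, and then argue that the measure may be taken on $\partial_m(\s,p)$ and is unique. We may assume $u(o)=1$: if $u(o)=0$, then harmonicity together with accessibility of every state from $o$ (Assumption~\ref{as:required}) forces $u\equiv0$, and $\mu_u=0$ works; otherwise rescale.

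For existence I would work inside $\mathcal S=\{v:\s\to[0,\infty)\mid v\ \text{superharmonic},\ v(o)=1\}$. This set is convex, and it is compact and metrizable in the topology of pointwise convergence on the countable set $\s$: it is closed under pointwise limits by Fatou's lemma applied to $v\ge pv$, and $v(x)\le 1/p^n(o,x)$ for any $n$ with $p^n(o,x)>0$, which exists by accessibility. By Choquet's theorem $u$ is the barycentre of a probability measure $\lambda$ carried by the (Borel) set of extreme points of $\mathcal S$. By the Riesz decomposition theorem every extreme superharmonic function is either a normalised Green's function $K^p(\cdot,y)$ with $y\in\s$ --- a ``minimal potential'' --- or a minimal harmonic function; one then uses the construction of $M(\s,p)$ to see that every normalised minimal harmonic function equals $K^p(\cdot,\xi)$ for a unique $\xi\in\partial_m(\s,p)$, uniqueness because the continuous extensions of the $K^p(x,\cdot)$ separate boundary points. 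Since $u$ is harmonic, the potential part of its Riesz decomposition vanishes, which forces $\lambda$ to give no mass to $\{K^p(\cdot,y):y\in\s\}$ (indeed $\lambda(\{K^p(\cdot,y):y\in\s\})$ equals the value at $o$ of the potential part of $u$, namely $0$). Transporting $\lambda$ along $K^p(\cdot,\xi)\leftrightarrow\xi$ then yields a finite measure $\mu_u$ on $\partial_m(\s,p)$ with $u(x)=\int_{\partial_m(\s,p)}K^p(x,\xi)\,d\mu_u(\xi)$. A more hands-on route to existence is the classical exhaustion argument: take finite $F_n\uparrow\s$ with $o\in F_n$, write $u(x)=\e_x[u(x_{\tau_n})]$ for the exit time $\tau_n$ of $F_n$ (a.s.\ finite by transience), rephrase it as an integral of the discrete Poisson kernel against a probability measure on $M(\s,p)$ supported on the outer boundary of $F_n$, and pass to a weak-$*$ limit using compactness of $M(\s,p)$ and the convergence of the Poisson kernels to $K^p(x,\cdot)$.

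The crux is uniqueness, and this is where I expect the real work. The cleanest route is to observe that $\mathcal S$ is a \emph{Choquet simplex} --- the cone of positive superharmonic functions has the Riesz decomposition property, equivalently it is a lattice cone --- so by the Choquet--Meyer theorem each point has a \emph{unique} representing measure carried by the extreme points; specialising to the harmonic $u$ gives uniqueness of $\mu_u$. Equivalently, and in the probabilistic spirit of the paper, one pins down the measure explicitly: for the Doob transform $\p^u_o$ with $\frac{d\p^u_o}{d\p_o}\big|_{\mathcal F_n}=u(x_n)$, the $h$-transformed chain is transient, converges $\p^u_o$-almost surely in the Martin topology to a boundary point $X_\infty$ lying in $\partial_m(\s,p)$ a.s., and then $\mu_u(\cdot)=\p^u_o(X_\infty\in\cdot)$ is forced by the representation already obtained. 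Establishing the lattice/simplex property --- or, dually, the almost-sure convergence of the $h$-process to the minimal boundary together with the identification of $\mu_u$ as its exit law --- is the substantive step; granting it, the remaining assertion is immediate, since $K^p(o,\xi)=1$ for every $\xi$ gives $\mu_u(\partial_m(\s,p))=u(o)<\infty$, so $\mu_u$ is indeed a finite measure.
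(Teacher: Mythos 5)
The paper never proves this statement: Theorem~\ref{th:doob59} is imported from Doob's 1959 work (see also Dynkin and Woess, which the paper cites for the Martin construction), so there is no in-paper argument to measure your attempt against. Judged on its own terms, your outline is the standard classical route and its skeleton is sound: the reduction to $u(o)=1$ is correct (accessibility from $o$ forces $u\equiv 0$ when $u(o)=0$); the set $\mathcal S$ of superharmonic functions normalized at $o$ is indeed convex, pointwise-compact (your bound $v(x)\le 1/p^n(o,x)$ is right) and metrizable; Choquet gives a representing measure on its extreme points; the Riesz decomposition plus $p^nu=u$ (justified by monotone convergence of $p^nv$ down to the harmonic part, and Tonelli to exchange with the barycentre integral) kills the mass on normalized Green kernels; and the finiteness claim via $K^p(o,\xi)=1$ matches the remark the paper makes right after the theorem.

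The gaps are the two load-bearing steps, which you invoke rather than prove. First, the assertion that every normalized minimal harmonic function equals $K^p(\cdot,\xi)$ for some $\xi\in\partial_m(\s,p)$ does not follow merely from ``the construction of $M(\s,p)$'': the compactification only guarantees that the kernels $K^p(x,\cdot)$ extend continuously and separate boundary points. To see that minimal harmonics are actually attained as boundary kernels one needs either the exhaustion/approximation argument yielding a representing measure on all of $\partial(\s,p)$ followed by a localization argument (minimality forces that measure to be a point mass), or Doob--Hunt's convergence theorem for $h$-processes; your exhaustion sketch itself glosses the nontrivial comparison between exit distributions of finite sets and Green-function ratios. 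Second, uniqueness --- the Choquet--Meyer simplex (lattice) property of the superharmonic cone, or equivalently the a.s.\ convergence of the Doob $u$-transform to the minimal boundary and the identification $\mu_u(\cdot)=\p^u_o(X_\infty\in\cdot)$ --- is precisely the substantive content of Doob's theorem, and ``granting it'' grants the theorem. In short, you have produced a correct and well-organized roadmap with the right landmarks, but completing it would amount to reproducing Doob's proof, which is exactly why the paper cites the result instead of proving it; also note, for the final transport step, that one should check that $\xi\mapsto K^p(\cdot,\xi)$ is a continuous injection of $\partial_m(\s,p)$ into $\mathcal S$, hence a Borel isomorphism onto its image, so that pushing the Choquet measure back to the boundary is legitimate.
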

The measure $\mu_u$ is called the representation of $u$. Since    $K^p(o,\xi)=1$ for all $\xi\in \partial_m (\s,p)$, we have that  $u(o) = \mu_u (\partial_m(\s,p))$. Note that we can consider $\mu_u$ as a finite measure on the compact metric space $\partial (\s,p)$, by letting $\mu_u(\partial (\s,p)-\partial_m(\s,p))=0$. 
 A special role is reserved for $\mu_{\bf 1}$ the representation of the constant function ${\bf 1}$. This is due to the following two results: 
\begin{cor}\label{cor:isometry}
The mapping  $T$ given by 
\begin{equation}\label{eq:mapping}  (T f)(x) = \int_{\partial_m(\s,p)} K^p(x,\xi) f (\xi) d\mu_{\bf 1}.
\end{equation}  
defines a linear isometry from $L^\infty(\mu_1)$ onto $H^\infty (\s,p)$ and also $L^\infty(\mathcal{PB}(\s,p))$.
\end{cor}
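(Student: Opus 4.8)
The plan is to verify, in order, that $T$ is a well-defined operator of norm at most one into $H^\infty(\s,p)$, that it is onto $H^\infty(\s,p)$, and that it is norm-preserving; the stated identification with $L^\infty(\pb(\s,p))$ then follows at once by composing $T$ with the isometric isomorphism $H^\infty(\s,p)\cong L^\infty(\pb(\s,p))$ from Section~\ref{sec:Poisson}. For well-definedness, fix $x$: the function $K^p(x,\cdot)$ is continuous on the compact space $M(\s,p)$, hence bounded, so for $f\in L^\infty(\mu_{\bf 1})$ the integral $(Tf)(x)$ converges absolutely, and bounding $|f|$ by $\|f\|_\infty$ together with the defining property $\int K^p(x,\xi)\,d\mu_{\bf 1}(\xi)={\bf 1}(x)$ of $\mu_{\bf 1}$ gives $\|Tf\|_\infty\le\|f\|_\infty$ and $T{\bf 1}={\bf 1}$. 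Since $\mu_{\bf 1}$ is carried by $\pa_m(\s,p)$, where every $K^p(\cdot,\xi)$ lies in $H_+(\s,p)$, writing $f=f^+-f^-$ and applying Tonelli to each part (legitimate because $\sum_y p(x,y)\int K^p(y,\xi)|f(\xi)|\,d\mu_{\bf 1}(\xi)\le\|f\|_\infty<\infty$) I may interchange the sum over $y$ with the integral and conclude $p(Tf)=Tf$, so $Tf\in H^\infty(\s,p)$.

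The heart of the argument is surjectivity, and the engine for it is a comparison principle for Doob representations: if $h_1,h_2\in H_+(\s,p)$ satisfy $h_1\le h_2$, then their representations satisfy $\mu_{h_1}\le\mu_{h_2}$. Indeed $h_2-h_1\in H_+(\s,p)$, so by Theorem~\ref{th:doob59} it has a representation $\lambda$; then $h_2$ is represented both by $\mu_{h_2}$ and by $\mu_{h_1}+\lambda$, and the uniqueness clause of Theorem~\ref{th:doob59} forces $\mu_{h_2}=\mu_{h_1}+\lambda\ge\mu_{h_1}$. Granting this, I take $u\in H^\infty(\s,p)$ and set $v=\|u\|_\infty{\bf 1}+u$, which lies in $H^\infty_+(\s,p)$ and satisfies $0\le v\le 2\|u\|_\infty{\bf 1}$; comparing with the representation $2\|u\|_\infty\mu_{\bf 1}$ of $2\|u\|_\infty{\bf 1}$ gives $\mu_v\le 2\|u\|_\infty\mu_{\bf 1}$, whence (Radon--Nikodym) $\mu_v=g\,\mu_{\bf 1}$ with $g\in L^\infty(\mu_{\bf 1})$ and $0\le g\le 2\|u\|_\infty$. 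Therefore $v=Tg$, and since $\|u\|_\infty{\bf 1}=T(\|u\|_\infty)$ we obtain $u=T(g-\|u\|_\infty)$ with $g-\|u\|_\infty\in L^\infty(\mu_{\bf 1})$.

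For the reverse norm bound $\|f\|_\infty\le\|Tf\|_\infty$ I would first treat $f\ge 0$ $\mu_{\bf 1}$-a.e.: then $Tf\in H_+(\s,p)$ is represented by $f\mu_{\bf 1}$, and for $c\ge 0$ the equivalences ``$f\le c$ a.e.'' $\iff$ ``$f\mu_{\bf 1}\le c\mu_{\bf 1}$'' $\iff$ ``$Tf\le c{\bf 1}$'' hold (the first elementary, the second by monotonicity of $\mu\mapsto\int K^p(\cdot,\xi)\,d\mu$ in one direction and by the comparison principle applied to $c{\bf 1}-Tf\in H_+(\s,p)$ in the other), so taking infima over admissible $c$ yields $\|f\|_\infty=\|Tf\|_\infty$. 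For general $f$, apply this to the nonnegative functions $f+\|f\|_\infty$ and $\|f\|_\infty-f$, and combine $T(f\pm\|f\|_\infty)=Tf\pm\|f\|_\infty{\bf 1}$ with the triangle inequality and $\|f\|_\infty=\max(\mathrm{ess\,sup}\,f,\ \mathrm{ess\,sup}(-f))$ to get $\mathrm{ess\,sup}\,f\le\|Tf\|_\infty$ and $\mathrm{ess\,sup}(-f)\le\|Tf\|_\infty$, hence $\|f\|_\infty\le\|Tf\|_\infty$. This makes $T$ a surjective linear isometry onto $H^\infty(\s,p)$, and composing with the isometry of Section~\ref{sec:Poisson} completes the proof.

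I expect the only real point of difficulty to be the comparison principle together with the bookkeeping around it: its proof is short but rests squarely on the uniqueness half of Theorem~\ref{th:doob59}, and one must be careful that each auxiliary function ($h_2-h_1$, $v$, $c{\bf 1}-Tf$) genuinely lies in $H_+(\s,p)$ before invoking the representation theorem. By contrast the interchange of summation and integration in the first step is routine. One could instead invoke the convergence of the chain to the Martin boundary and deduce everything from the Poisson-boundary isometry of Section~\ref{sec:Poisson}, but that convergence statement is itself nontrivial, so the self-contained route above seems preferable.
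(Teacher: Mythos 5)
Your proposal is correct and follows essentially the same route as the paper: the norm bound $\|Tf\|_\infty\le\|f\|_\infty$ from the normalization of $\mu_{\bf 1}$, harmonicity by interchanging the sum with the integral, the reverse inequality and surjectivity both driven by the uniqueness clause of Theorem~\ref{th:doob59} (your ``comparison principle'' is exactly the paper's observation that $\|u\|_\infty\mu_{\bf 1}=\mu_{\|u\|_\infty{\bf 1}-u}+\mu_u$, forcing $\mu_u\ll\mu_{\bf 1}$ with density bounded by $\|u\|_\infty$), and the same $\pm\|f\|_\infty{\bf 1}$ trick to pass from nonnegative to signed $f$. The differences (packaging the uniqueness argument as a lemma, the equivalence chain in $c$, handling surjectivity before isometry) are purely organizational.
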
 
\begin{proof}
The right-hand side of \eqref{eq:mapping} defines a  linear mapping from $L^\infty(\mu_{\bf 1})$ to the linear space of bounded real-valued functions on $\s$ equipped with the $\sup$-norm. Also,  
 $$|Tf(x) | \le \|f\|_\infty \int_{\partial_m(\s,p)} K^p(x,\xi) d \mu_{\bf 1}(\xi) = \|f\|_\infty.$$ 
 Therefore $\|Tf \|_\infty \le \|f\|_\infty$. 
  By dominated convergence, 
$$ p (T f)(x) =  \int_{\partial_m(\s,p)}\sum_y p(x,y)  K^p(y,\xi) f (\xi) d \mu_{\bf 1}(\xi)=  \int_{\partial_m(\s,p)} K^p(x,\xi) f(\xi) d \mu_{\bf 1} (\xi)=(Tf)(x),$$ 
therefore $Tf \in H^\infty (\s,p)$. Next we show that $T$ is an isometry. Suppose first that $f\in L^\infty(\mu_{\bf 1})$ is nonnegative, and let $u = Tf$. Then since $\|u\|_{\infty}{\bf 1}-u$ and $u$ are both in $H_+(\s,p)$, it follows from the uniqueness assertion in Theorem \ref{th:doob59}, that $\|u\|_\infty \mu_{\bf 1} =\mu_{ \|u\|_{\infty}{\bf 1}-u}+ \mu_u$, the sum of two positive measures. Therefore, not only $\mu_u \ll \mu_{\bf 1}$, but also, $0\le  \frac{ d \mu_u}{d \mu_{\bf 1}} \le \|u\|_\infty$. Since $\frac{d \mu_u}{d\mu_{\bf 1} } =f$, we have $\|f\|_\infty \le \|u\|_\infty$. In the case of signed $f$, this means 
$$
\| \left .  \|f\|_\infty{\bf 1} \pm f\right.  \|_\infty \le \| \left. \|f\|_\infty {\bf 1} \pm  Tf \right.\|_\infty.
$$ 
The righthand side is bounded above by $\|f\|_\infty + \|Tf \|_\infty$. As for the left hand side, we can choose the sign so that the norm is equal to $2\|f\|_\infty$. Therefore, $\|Tf \|_\infty \ge \|f\|_\infty$, and since the reverse inequality is already established,  $T$ is an isometry. Finally, we show that $T$ is onto. If  $v\in  H^\infty_+(\s,p)$, then as already seen,  $\mu_v \ll \mu_{\bf 1}$, and $\frac{d\mu_v}{d\mu_{\bf 1}} \in L^\infty(\mu_{\bf 1})$, so that $v$ is in the range of $T$.  If $u \in H^\infty(\s,p)$, then we can write it as a difference of two elements in $H^\infty_+(\s,p)$, i.e, $u=(\|u\|_\infty{\bf 1}  + u) - \|u\|_{\infty}$.  Therefore, $u$ is also in the range of $T$. 
\end{proof} 
\begin{theorem}\cite{Dynkin69,Woess09}
\label{th:limits}
\begin{enumerate} 
\item There exists a $\partial(\s,p)$-valued random variable $x_\infty$ such that for $\p_x$-a.s. sample path $\lim_{n\to\infty} x_n=x_\infty $ is in the Martin topology for all $x$ in $\s$. 
\item  The random variable $x_\infty$ is supported on $\partial_m(\s,p)$ and for every measurable set $A$ in $\partial_m(\s,p)$,   
$$
\p_x (x_\infty \in A) = \int_A K^p(x,\xi) d \mu_{\bf 1} (\xi).
$$
\end{enumerate} 
\end{theorem}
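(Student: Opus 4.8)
\emph{Proof strategy.} This is the classical Martin convergence theorem for transient chains, and I would follow the standard route, using Assumption~\ref{as:required} and the treatments in \cite{Doob59,Dynkin69,Derriennic1976,Sawyer1997,Woess09}.

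\emph{Part (1).} Recall that $M(\s,p)$ is compact and metrizable, with a metric having the property that a sequence $w_j$ in $\s$ leaving every finite set converges to $\xi\in\partial(\s,p)$ precisely when $K^p(x',w_j)\to K^p(x',\xi)$ for every $x'$ in the countable set $\s$ (this uses that the extended kernels separate the boundary points). Hence along $\p_x$-a.e.\ path the sequence $(x_n)$ has subsequential limits in $M(\s,p)$. Transience first enters via Borel--Cantelli: since $G^p(x,w)=\sum_n p^n(x,w)<\infty$, we get $\p_x(x_n=w\ \text{infinitely often})=0$ for every $w$, so a.s.\ $(x_n)$ eventually leaves every finite set and every subsequential limit lies in $\partial(\s,p)$. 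Promoting ``subsequential'' to ``convergent'' is the heart of the matter. The plan: fix a finite exhaustion $B_0\subseteq B_1\subseteq\cdots$ of $\s$, let $L_m=\sup\{n:x_n\in B_m\}$ (a.s.\ finite and increasing to $\infty$, by transience), and exploit the last-exit decomposition $\p_x(x_{L_m}=w)=G^p(x,w)\,q_m(w)$ for $w\in B_m$, where $q_m(w)=\p_w(x_n\notin B_m,\ n\ge 1)$, together with a time-reversal identity, to show that for each fixed $x'$ the process $m\mapsto K^p(x',x_{L_m})$ is, after a routine normalization, a nonnegative reverse martingale for the decreasing family of $\sigma$-fields generated by the post-$L_m$ trajectory. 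Reverse-martingale convergence then gives $\p_x$-a.s.\ convergence of $K^p(x',x_{L_m})$, simultaneously over the countable set of $x'$, hence convergence of $x_{L_m}$ in $M(\s,p)$ to some $x_\infty\in\partial(\s,p)$; since $L_m\uparrow\infty$ and the path avoids $B_m$ after time $L_m$, the full sequence $x_n$ converges to the same limit $x_\infty$ (and doing this for each starting state $x$, hence for all $x$ simultaneously). I expect this last-exit/time-reversal martingale to be the main obstacle: it is exactly the step where ``a.s.'' rather than merely ``subsequential'' convergence is bought.

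\emph{Part (2).} For Borel $A\subseteq\partial(\s,p)$ set $h_A(x)=\p_x(x_\infty\in A)$. Conditioning on the first step and using shift-invariance of $x_\infty$ gives $ph_A=h_A$, so $h_A\in H^\infty_+(\s,p)$ with $0\le h_A\le{\bf 1}$ and $h_{\partial(\s,p)}={\bf 1}$; by Theorem~\ref{th:doob59}, $h_A(x)=\int K^p(x,\xi)\,d\mu_{h_A}(\xi)$ for a unique finite measure $\mu_{h_A}$ on $\partial_m(\s,p)$, and uniqueness forces $\mu_{h_A}+\mu_{h_{\partial(\s,p)\setminus A}}=\mu_{\bf 1}$, hence $\mu_{h_A}\le\mu_{\bf 1}$. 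To identify $\mu_{h_A}={\bf 1}_A\,\mu_{\bf 1}$ I would bring in the Doob $h$-transforms $p^\xi(x,y)=\frac{K^p(y,\xi)}{K^p(x,\xi)}p(x,y)$, $\xi\in\partial_m(\s,p)$, which are transient since $G^{p^\xi}(x,y)=\frac{K^p(y,\xi)}{K^p(x,\xi)}G^p(x,y)$, together with the standard facts that the $h$-process converges a.s.\ to its pole, $\p^\xi_x(x_\infty=\xi)=1$, and that $\p_x$ disintegrates over the boundary as $\p_x=\int_{\partial_m(\s,p)}\p^\xi_x\,K^p(x,\xi)\,d\mu_{\bf 1}(\xi)$ (Hunt's theorem). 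Then $h_A(x)=\int_{\partial_m(\s,p)}\p^\xi_x(x_\infty\in A)\,K^p(x,\xi)\,d\mu_{\bf 1}(\xi)=\int_A K^p(x,\xi)\,d\mu_{\bf 1}(\xi)$, and taking $A=\partial(\s,p)\setminus\partial_m(\s,p)$ yields $\p_x(x_\infty\notin\partial_m(\s,p))=0$, the support statement.
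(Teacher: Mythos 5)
The paper does not actually prove Theorem~\ref{th:limits}: it is quoted from \cite{Dynkin69,Woess09}, so the only meaningful comparison is with the classical proofs in those sources --- and your outline is exactly that classical Hunt--Dynkin route, correctly organized: transience plus Borel--Cantelli to force the path out of every finite set, the last-exit decomposition $\p_x(x_{L_m}=w)=G^p(x,w)q_m(w)$ with time reversal to upgrade subsequential to a.s.\ convergence, and then identification of the exit law. Two caveats. First, the object you call a ``reverse martingale'' is, along the Green-reversed chain $\hat p(w,w')=G^p(x,w')p(w',w)/G^p(x,w)$, only a nonnegative supermartingale: one has
$$\sum_{w'}\hat p(w,w')\,\frac{G^p(x',w')}{G^p(x,w')}=\frac{G^p(x',w)-\delta_{x'}(w)}{G^p(x,w)},$$
so no normalization makes it an exact martingale; this is harmless (nonnegative backward supermartingales with bounded expectations converge a.s.), but the claim should be stated that way, and this verification is precisely the step you deferred as ``the main obstacle.''

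Second, in part (2) you invoke Hunt's disintegration $\p_x=\int\p_x^\xi\,K^p(x,\xi)\,d\mu_{\bf 1}(\xi)$ together with a.s.\ convergence of the conditioned $\xi$-process to its pole; these two facts essentially contain the statement being proved, so as a self-contained argument this is close to circular. The cleaner standard route, available directly from your part (1), is: the last-exit identity gives $\p_x(x_{L_m}=w)=K^p(x,w)\,\p_o(x_{L_m}=w)$, so letting $m\to\infty$ and using continuity of $K^p(x,\cdot)$ on the compact metrizable $M(\s,p)$ yields $d\nu_x=K^p(x,\cdot)\,d\nu_o$ for the exit laws $\nu_x$ of $x_\infty$; then $\int K^p(x,\xi)\,d\nu_o(\xi)=1$ exhibits $\nu_o$ as a representing measure of ${\bf 1}$, and the genuinely separate work --- where the $h$-processes properly enter --- is showing $\nu_o\big(\partial(\s,p)\setminus\partial_m(\s,p)\big)=0$, after which uniqueness in Theorem~\ref{th:doob59} gives $\nu_o=\mu_{\bf 1}$ and hence the stated formula. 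With those two repairs your proposal is a correct blueprint of the proof in the cited references.
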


\section{Transformed Markov chains}\label{sec:transformed}
\subsection{Stopping time}
\label{sec:stopped_process}
A measurable function $\tau:\s^{\Z_+}\to \Z_+\cup\{\infty\}$ is called stopping time, if for every $k\in \Z_+$, the set $\{ \x: \tau(\x)=k\}$ is a measurable set in the sigma-algebra generated by the first $k+1$ coordinate function $\sigma(\omega_0,\omega_1,\cdots\omega_k)$. In what follows, we will assume that 
\begin{equation} 
\label{eq:finiteness} 
\p_x(\tau < \infty)=1 \mbox{  for all }x\in \s.
\end{equation}
Given a stopping time $\tau$ satisfying \eqref{eq:finiteness}, a nondecreasing sequence is induced by iteration:  
$$
\tau_0 =0,\ \ ~\tau_1=\tau,\ \  \tau_{n+1} = \begin{cases} \tau_n + \tau \circ  S^{\tau_n} & \tau_n <\infty; \\ \infty & \mbox{otherwise.} \end{cases}$$ 
With this sequence, we obtain a transformed process $\x^{\tau}=(y_n(\x):n\in\Z_+)$ given by $y_n = x_{\tau_n}$. 
By the strong Markov property, see \cite{Revuz1984}, 
$$ \p( x_{\tau_{n+1}}=z | \sigma(x_{\tau_1}\cdots,x_{\tau_n} )) = \p_{y_n} (x_{\tau} = z).$$
Therefore $\x^{\tau}$ is a Markov chain with  the transition function 
$$
p^{\tau}(x,y)=\p_x (x_{\tau} = y).
$$

Note that Doob's optional stopping theorem implies that for any stopping time $\tau$, we can write
$$
H^{\infty}(\s,p)\subseteq H^{\infty}(\s,p^\tau).$$
Similarly,   
$$H^{\infty}(\s\times\Z_+,p^+)\subseteq H^{\infty}(\s\times\Z_+,(p^{+})^\tau).
$$

Let $\p^\tau$ denote the probability measure on the space of sample paths with respect to the transition function $p^\tau$. We could map almost every sample path with respect to $p$ to  a sample path with respect to $p^\tau$:
$$
\begin{array}{rcl}
(\s^{\Z_+},\p_\theta)&\longrightarrow &(\s^{\Z_+},\p^\tau_\theta)\\
\x=(x_n)_n &\longmapsto & \x_\tau:=(x_{\tau_n})_n,
\end{array}
$$
which implies $L^\infty(\pb(\s,p))$ is isomorphic to a subspace of $L^\infty(\pb(\s,p^\tau))$.

Consider the following:   
\begin{assum} \label{as:asymp} 
There exists a mapping $\rho:{\s}^{\Z_+}\to \Z_+$ such that
$$
\displaystyle\limsup_{n\to\infty}  \inf_{x} \p_x (\rho_n (\x) \in \langle \tau(\x) \rangle)=1,
$$
where $\rho_n({\x}) = n + \rho \circ S^n({\x})$ and  $\langle \tau(\x) \rangle=(\tau_0(\x),\tau_1(\x),\cdots)$. 
\end{assum} 
In order to be able to employ the tools from the last section, we need to insure that $p^\tau$ satisfies the conditions of Assumption~\ref{as:required}. 
We observe that if $\x$ (equivalently, $p$) is transient then so is $\x^\tau$ (equivalently $p^\tau$).

\begin{lemma}
If $p$ is transient, then so is  $p^\tau$.  Furthermore, for all $x$ and $y\in \s$, we have $G^{p^\tau}(x,y) \le G^p(x,y)$. 
\end{lemma}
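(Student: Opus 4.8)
The plan is to realize each iterate $x_{\tau_n}$ of the transformed chain as a sub-sequence of the original chain's trajectory, and then compare Green's functions by summing over the "arrival times" $\tau_n$. Concretely, fix $x,y\in\s$. The key observation is that a visit of $\x^\tau$ to $y$ at step $n$ is exactly the event $\{x_{\tau_n}=y,\ \tau_n<\infty\}$, and because the sequence $(\tau_n)$ is strictly increasing on the event $\{\tau_n<\infty\}$ — here one should note $\tau\ge 1$ on the relevant paths whenever $x_{\tau_n}\neq$ the current state, or at worst use that distinct values of $n$ with $\tau_n$ finite give distinct times unless $\tau=0$ somewhere, which needs a brief remark — the events $\{x_{\tau_n}=y\}$ for different $n$ are "indexed" by pairwise distinct times $\tau_n$. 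Summing,
$$
G^{p^\tau}(x,y)=\sum_{n\ge 0}\p_x(x_{\tau_n}=y,\ \tau_n<\infty)=\sum_{n\ge 0}\sum_{k\ge 0}\p_x(x_{\tau_n}=y,\ \tau_n=k).
$$

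The next step is to interchange the order of summation and bound the inner sum over $n$. For fixed $k$, the events $\{\tau_n=k,\ x_k=y\}$ over varying $n$ are disjoint (at most one $n$ can have $\tau_n=k$, by strict monotonicity of the finite iterates), so
$$
\sum_{n\ge 0}\p_x(x_{\tau_n}=y,\ \tau_n=k)=\p_x\big(x_k=y,\ k\in\langle\tau(\x)\rangle\big)\le \p_x(x_k=y)=p^k(x,y).
$$
Summing over $k$ then yields $G^{p^\tau}(x,y)\le\sum_{k\ge 0}p^k(x,y)=G^p(x,y)<\infty$, which simultaneously proves transience of $p^\tau$ and the claimed inequality. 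One should also check $G^{p^\tau}(x,y)$ is being computed correctly, i.e. that $(p^\tau)^n(x,y)=\p_x(x_{\tau_n}=y)$; this is an immediate induction from the strong Markov identity $(p^\tau)^{n+1}(x,y)=\sum_z (p^\tau)^n(x,z)p^\tau(z,y)$ already recorded in the text, together with $\p_x(\tau_n<\infty)=1$ which follows from iterating \eqref{eq:finiteness}.

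The only real subtlety — and the step I would be most careful about — is the claim that the finite iterates $\tau_0<\tau_1<\tau_2<\cdots$ are \emph{strictly} increasing, equivalently that $k\in\langle\tau(\x)\rangle$ for at most one index $n$ for each time $k$. This can fail only if $\tau=0$ with positive probability on some post-shift path; if $\tau\ge 1$ a.s.\ (as in any genuine hitting-time type example, and as is implicitly the working hypothesis since otherwise $p^\tau$ need not even be a transition function onto $\s$ in the intended sense) this is immediate. In the borderline case $\tau=0$ somewhere the inner sum picks up a harmless multiplicity, but one reduces to the strict case by noting $\tau_n=k$ forces, for the \emph{largest} such $n$, a decomposition into a genuinely strictly increasing sub-chain; alternatively one simply observes $\sum_n \p_x(x_{\tau_n}=y,\tau_n=k)$ still telescopes since only the jumps with $\tau\ge1$ advance $k$. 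I would state the lemma (as the authors implicitly do) under the convention that makes $(\tau_n)$ strictly increasing on finite paths, making the disjointness transparent and the proof two lines.
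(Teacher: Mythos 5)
Your proof is correct and is essentially the paper's argument: the paper disposes of the lemma in one pathwise sentence, namely that the trajectory of $\x^\tau$ is a subsequence of the trajectory of $\x$, so visit counts to each $y$ (and hence their expectations, the Green's functions) are dominated, which is exactly what your decomposition over arrival times $\tau_n=k$ and the disjointness-in-$n$ step establish at the level of probabilities. The strict-monotonicity caveat you flag (in effect $\tau\ge 1$) is equally tacit in the paper's use of the word ``subsequence,'' so there is no substantive difference between the two proofs.
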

\begin{proof}
	Transience of $p^\tau$ is equivalent to $\x^\tau$ visiting each state finitely often under $\p_x$ for all $x\in \s$. Since this holds for $\x$, and the paths of $\x^\tau$ are subsequences of $\x$ both statements hold.  
\end{proof}	

Note that $p^\tau$ may, in general, not satisfy the second condition in Assumption \ref{as:required}. For example let $\s=\Z_+$, the set of nonnegative integers and let  $p(n,n+1)=1$, then $G^p(0,n)=1>0$ for any natural number $n$. If we consider the stopping time $\tau=2$, then for any $n\in\Z_+$, $G^{p^{\tau}}(m,n)>0$ if and only if $n-m \in 2\Z_+$. Therefore there does not exist $m\in\Z_+$ such that $G^{p^{\tau}}(m,n)>0$ for all $n\in\Z_+$.  In Section~\ref{sec : extension}, we will remedy this by expanding the state space.  

\section{The extension} \label{sec :  extension}
Assumption \ref{as:asymp} does not warrant that  $p^\tau$ satisfies the second condition of Assumption \ref{as:required} which is required for defining the Martin boundary. If it does, we need not do anything. Otherwise, we need to introduce the following completion. 

Our starting point is a transition function $p$ on $\s$ satisfying Assumption~\ref{as:required},  and a stopping time $\tau$ for $p$ satisfying Assumption~\ref{as:asymp}.

The first step is to append a state to $\s$, and extend $p$ to the new resulting extended state space. Let $\s^* = \s \cup\{*\}$, where $*$ is a state not in $\s$. Let $\theta$ be any probability measure on $\s^*$ with $\theta(x)>0$ for all $x\in \s$ and $\theta(*)=0$.  Extend $p$ to $\s^*$ by letting
$$  p^*(x,y) = \begin{cases} p(x,y)& ~x,y\in \s\\ 
\theta(y)& x=* \\ 
0 & \mbox{otherwise}. 
\end{cases}$$
We write  $\x^*$ for the corresponding Makrov chain. 
Clearly $p^*$ satisfies both conditions in Assumption~\ref{as:required} with $\s$ replaced by $\s^*$ and $o=*$. We also write $\p^*_x $ and $\e^*_x$ for the distribution and corresponding expectation associated with $\x^*$, starting from $x$ in $\s^*$. Note that for any $x\in \s$, we have $\p^*_x$ is supported on $\s$-valued sequences and coincides with $\p_x$. It could be therefore viewed as an extension of $\p_x$. 

This extension preserves the space of bounded harmonic functions: 
\begin{lemma}
Let $f$ be a bounded  function on $\s$. Then, $f$ is $p$--harmonic if and only there exists a unique bounded function $f^*$ on $\s^*$ such that 
\begin{enumerate} 
\item $f^*$ is an extension of $f$, that is $f^*(x)=f(x)$ for all $x$ in $\s$,
\item $f^*$ is $p^*$--harmonic. 
\end{enumerate} 
\end{lemma}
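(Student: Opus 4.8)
The plan is to prove both implications by direct computation from the piecewise definition of $p^*$; the only substantive remark is that a $p^*$--harmonic extension has no freedom in its value at the auxiliary state $*$, which simultaneously gives existence and uniqueness. Recall that $p^*(x,y)=p(x,y)$ for $x,y\in\s$, that $p^*(x,*)=0$ for $x\in\s$, and that $p^*(*,y)=\theta(y)$ with $\theta(*)=0$, so $p^*(*,*)=0$.

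For the ``only if'' direction, assume $f$ is bounded and $p$--harmonic. I would define $f^*$ on $\s^*$ by $f^*(x)=f(x)$ for $x\in\s$ and $f^*(*)=\sum_{y\in\s}\theta(y)f(y)$. Since $f$ is bounded and $\theta$ is a probability measure, this series converges absolutely with $|f^*(*)|\le\|f\|_\infty$, so $f^*$ is bounded and in fact $\|f^*\|_\infty=\|f\|_\infty$. To check that $f^*$ is $p^*$--harmonic I verify the mean value identity at each point of $\s^*$: for $x\in\s$, using $p^*(x,*)=0$ and $p^*(x,\cdot)=p(x,\cdot)$ on $\s$, the identity collapses to $f(x)=pf(x)$, which holds by hypothesis; at $x=*$, by construction $\sum_{y\in\s^*}p^*(*,y)f^*(y)=\sum_{y\in\s}\theta(y)f(y)=f^*(*)$. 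For uniqueness, if $g$ is any bounded $p^*$--harmonic function with $g|_\s=f$, then harmonicity of $g$ at $*$ forces $g(*)=\sum_{y\in\s^*}p^*(*,y)g(y)=\sum_{y\in\s}\theta(y)f(y)$, which is precisely $f^*(*)$; hence $g=f^*$.

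For the ``if'' direction, suppose a bounded $p^*$--harmonic extension $f^*$ of $f$ exists. Evaluating the harmonicity identity for $f^*$ at an arbitrary $x\in\s$ and again using $p^*(x,*)=0$ and $p^*(x,y)=p(x,y)$ for $y\in\s$ yields $f(x)=f^*(x)=\sum_{y\in\s}p(x,y)f(y)=pf(x)$; the summability condition $\sum_y p(x,y)|f(y)|<\infty$ is automatic since $f$ is bounded. Therefore $f$ is $p$--harmonic.

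There is no real obstacle here: every step is a one-line manipulation of the definition of $p^*$. The only point worth emphasizing is structural rather than computational — the apparent extra degree of freedom, namely the value at $*$, is pinned down by requiring harmonicity at $*$, and this is exactly what is needed for uniqueness; implicitly it is the choice $\theta(*)=0$ (hence $p^*(*,*)=0$) that keeps $f^*(*)$ off the right-hand side of the identity at $*$ and makes this value an explicit average of the values of $f$.
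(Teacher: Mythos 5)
Your proof is correct and follows essentially the same route as the paper: define $f^*(*)=\sum_{y\in\s}\theta(y)f(y)$, verify harmonicity pointwise, and note that harmonicity at $*$ (together with $\theta(*)=0$) forces this value, giving uniqueness. You merely spell out the verification and the uniqueness/reverse steps that the paper leaves as immediate.
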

\begin{proof}
It is enough to define $f^*(*) = \sum_{y\in \s} \theta(y) f(y)$ and $f^*(x)=f(x)$ for any $x$ in $\s$. Them $f^*$ is an extension of $f$ and a bounded $p^*$--harmonic. The reverse direction is clear.
\end{proof}
Next we extend the stopping time $\tau$ to   $\s^*$-valued sequences by setting 
 \[ \tau^*(\x^*) = \begin{cases} \tau(\x^*)&\mbox{if } x_0^*,x_1^*,\dots \in \s \\
  1+ \tau(x_1^*,x_2^*,\dots)&\mbox{if }  x_0^*=*,x_1^*,x_2^*,\dots \in \s\\
  \min\{j:x_j^*= *\}&  \mbox{ otherwise} 
 \end{cases} 
 \]

It immediately follows that  $\tau^*$ is a stopping time for $\x^*$. Note that under $\p_x^*$ for  $x\in \s$, we have $\tau^*=\tau$ a.s. Furthermore,   if $x\in \s^*$,  then $\tau^*$ is finite a.s. $\p^*_x$. As a result, and similarly to the definition of $p^\tau$, we have an induced transition function $(p^*)^{\tau^*}$, defined as follows: 
$$ 
(p^*)^{\tau^*} (x,y) = \p_x^* (\x^*_{\tau^*}=y) = \begin{cases} p^{\tau}(x,y) & x,y \in \s \\ \sum_{x'}\theta(x') p^\tau(x',y) & x =*,y \in \s\\
0 & \mbox{otherwise} \end{cases}
$$

We will write $\y^*$ for the induced chain, that is $y^*_n = \x^*_{\tau^*_n},~n\in\Z_+,$ 
where, 
$$
\tau^*_0=0\ \mbox{ and }\ \tau^*_{n+1} = \tau^*_n+\tau^*\circ S^{\tau^*_n}$$ 
Finally, we restrict $(p^*)^{\tau^*}$ to $\widehat \s$, the subset of  all states which can be reached from any state under $(p^*)^{\tau^*}$ and the state $*$. More precisely, $\widehat \s$, defined as follows: 
$$\widehat\s  = \{y \in \s^*: (p^*)^{\tau^*}(x,y) >0\mbox{ for some }x\in \s^* \}\cup\{*\}.$$
Equivalently, 
\begin{align*} \widehat \s 
 & = \{y\in \s : p^{\tau}(x,y)>0\mbox{ for some }x \in \s\}\cup \{*\}.
 \end{align*} 
 
 Denote the restriction of $(p^*)^{\tau^*}$ to $\widehat \s$ by $\hpt$.  The sample paths with respect to the transition function $\hpt$ will be denoted by $\widehat\y=(\widehat y_0,\widehat y_1,\widehat y_3,\cdots)$. By construction, the Markov chain associated with the transition function $\hpt$ on $\widehat \s$ satisfies Assumption~\ref{as:required}.

 \section{Main Result}\label{sec:main}
 We are ready to state our main results. 
 \begin{theorem}
 \label{th:newmain}
  Suppose that $p$ is a transition function on $\s$ satisfying Assumption \ref{as:required} and that $\tau$ is a stopping time on $\s$-valued sequences satisfying Assumption \ref{as:asymp}. 
  Then for any $0\le u \in H^\infty (\s,p^\tau)$, there exists a function  $\bar{u}$ on   $\s\times \Z_+$ such that
  \begin{enumerate} 
  	\item ${\bar u}(x,0)=u(x),~x\in \s$. 
  \item  $0\le {\bar u}\in S^\infty (\s,p)$. 
  \item $\|{\bar u} \|_\infty =\|u\|_\infty$. 
  \end{enumerate} 
 \end{theorem}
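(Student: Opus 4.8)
The plan is to build, from $u$, a bounded nonnegative tail-measurable function $\psi$ on the path space of $\x$ and then transport it to a bounded nonnegative $p$-harmonic sequence via the isometric isomorphism $S^\infty(\s,p)\cong L^\infty(\tb(\s,p))$ recalled in Section~\ref{sec:preliminaries}. The bulk of the argument is the verification that $\psi$ is tail-measurable, and that is the one point at which Assumption~\ref{as:asymp} is used.

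I would set $\psi(\x):=\lim_{n\to\infty}u\bigl(x_{\tau_n(\x)}\bigr)$. Since $u$ is bounded and $p^\tau$-harmonic, the strong Markov property makes $\bigl(u(x_{\tau_n})\bigr)_n$ a bounded martingale for the filtration $\bigl(\sigma(x_0,\dots,x_{\tau_n})\bigr)_n$, under $\p_\theta$ and under each $\p_x$, so the limit exists a.s.; extend $\psi$ by $0$ off the exceptional $\p_\theta$-null set. Then $0\le\psi\le\|u\|_\infty$ pointwise, and since $\e_y\bigl[u(x_{\tau_n})\bigr]=(p^\tau)^n u(y)=u(y)$ for every $n$, bounded convergence gives $\e_y[\psi]=u(y)$ for all $y\in\s$. (Equivalently one may take $\psi=g(\widehat y_\infty)$, where $g\in L^\infty(\mu_{\bf 1})$ represents, via Corollary~\ref{cor:isometry} and Theorem~\ref{th:limits} applied to $\hpt$ on $\widehat\s$, the $\hpt$-harmonic extension of $u$; this is the Martin-boundary incarnation of the same object.) The structural fact I would then extract is the shift identity
$$\psi(\x)=\psi(S^t\x)\qquad\text{whenever }t\in\langle\tau(\x)\rangle .$$
Indeed, if $t=\tau_{j_0}(\x)$, the iteration rule for $\tau$ gives $\tau_{j_0+i}(\x)=t+\tau_i(S^t\x)$ for all $i\ge 0$, so $u\bigl(x_{\tau_{j_0+i}(\x)}\bigr)=u\bigl((S^t\x)_{\tau_i(S^t\x)}\bigr)$; letting $i\to\infty$, the left side converges (it is a tail of a convergent sequence) and hence so does the right, to the common value $\psi(\x)=\psi(S^t\x)$.

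Next I would prove $\psi$ is measurable with respect to the completed tail sigma-algebra $\overline{\T}(\s,p)$. With $\rho,\rho_n$ as in Assumption~\ref{as:asymp}, set $A_m:=\{\x:\rho_m(\x)\in\langle\tau(\x)\rangle\}$ and define the bounded function $F(\y):=\psi\bigl(S^{\rho(\y)}\y\bigr)$, so $F\circ S^m$ is $\mathcal{F}^\infty_m(\s)$-measurable. On $A_m$, applying the shift identity with $t=\rho_m(\x)=m+\rho(S^m\x)$ yields $\psi(\x)=\psi\bigl(S^{\rho(S^m\x)}(S^m\x)\bigr)=(F\circ S^m)(\x)$ off the fixed null set, so $\|\psi-F\circ S^m\|_{L^1(\p_\theta)}\le 2\|u\|_\infty\,\p_\theta(A^c_m)\le 2\|u\|_\infty\bigl(1-\inf_x\p_x(A_m)\bigr)$. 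Consequently $d_m:=\inf\{\|\psi-\xi\|_{L^1(\p_\theta)}:\xi\in L^\infty(\mathcal{F}^\infty_m(\s))\}$ — non-decreasing in $m$ because $\mathcal{F}^\infty_{m+1}(\s)\subseteq\mathcal{F}^\infty_m(\s)$ — admits a subsequence tending to $0$ by Assumption~\ref{as:asymp}; a non-decreasing sequence with a subsequence tending to $0$ is identically $0$, so $d_m\equiv 0$. Hence $\psi$ is $\overline{\mathcal{F}^\infty_m(\s)}$-measurable for every $m$, i.e. $\psi\in L^\infty\bigl(\overline{\T}(\s,p)\bigr)=L^\infty(\tb(\s,p))$.

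Finally, let $\bar u=(\bar u_n)_n\in S^\infty(\s,p)$ be the bounded nonnegative $p$-harmonic sequence corresponding to $\psi$ under the isometry, and put $\bar u(x,n):=\bar u_n(x)$ on $\s\times\Z_+$. Then $\|\bar u\|_\infty=\|\psi\|_\infty\le\|u\|_\infty$; moreover $\lim_n\bar u_n(x_n)=\psi(\x)$ a.s., so the bounded $\p_x$-martingale $\bigl(\bar u_n(x_n)\bigr)_n$ gives $\bar u(x,0)=\bar u_0(x)=\e_x[\psi]=u(x)$, which also forces $\|\bar u\|_\infty\ge\|u\|_\infty$ and hence equality. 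This $\bar u$ is the required function. The main obstacle is the measurability step: one must upgrade the input of Assumption~\ref{as:asymp} — uniform in the starting state but available only along a subsequence of times — to honest $\overline{\T}(\s,p)$-measurability, and the mechanism for this is exactly the monotonicity of the approximation numbers $d_m$ combined with the shift identity. Beyond that, only routine care is needed: with the $\p_\theta$-null set on which $\psi$ was set to $0$, with the identification $\bigcap_m\overline{\mathcal{F}^\infty_m(\s)}=\overline{\T}(\s,p)$, and with the (standard) fact that the isomorphism $S^\infty(\s,p)\cong L^\infty(\tb(\s,p))$ respects positivity and norms.
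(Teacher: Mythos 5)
Your proof is correct, but it follows a genuinely different route from the paper's own argument: the paper never works with the raw martingale limit $\psi(\x)=\lim_n u(x_{\tau_n})$. Instead it passes to the extended state space $\widehat\s$ and the chain $\hpt$, represents $u$ by a density against $\mu_{\bf 1}^{\hpt}$ on the Martin boundary, approximates that density by simple functions, converts the corresponding boundary events into honest tail events of the original chain (Lemma \ref{lem:inv_to_tail}, via Borel--Cantelli along a subsequence on which $\sup_x\p_x(\rho_{n_i}\notin\langle\tau\rangle)$ is summable), controls the sup-norm by Lemma \ref{lem:supremum}, and finally takes a pointwise limit of the resulting bounded harmonic sequences. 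Your construction -- define $\psi$, prove the shift identity $\psi=\psi\circ S^t$ for $t\in\langle\tau\rangle$, deduce tail measurability from Assumption \ref{as:asymp}, and then read off $\bar u$ from the isometry $S^\infty(\s,p)\cong L^\infty(\tb(\s,p))$ -- is essentially the alternative proof sketched in the Remark following the paper's proof (attributed to the referee), so it bypasses the Martin compactification, the $\s^*/\widehat\s$ extension and both lemmas entirely. What the paper's longer detour buys is reusable machinery: Lemma \ref{lem:inv_to_tail} and the Martin representation are exactly what is needed again for Theorem \ref{th:martin} on unbounded positive harmonic functions; what your route buys is economy, and the norm and positivity statements come for free from the $L^\infty$ picture.

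One step you dismiss as routine deserves an explicit line, because it is the only place your mechanism differs from the Borel--Cantelli mechanism of the Remark. You conclude from $d_m\equiv 0$ that $\psi$ is $\overline{\F^\infty_m}(\s)$-measurable for every $m$ and then invoke $\bigcap_m\overline{\F^\infty_m}(\s)=\overline{\T}(\s,p)$. This identity is true here but is not purely formal (completion and countable intersection of $\sigma$-algebras do not commute in general); the clean justification is Lévy's downward theorem: $\psi=\e_\theta[\psi\mid\F^\infty_m(\s)]$ a.s.\ for each $m$, and the reverse martingale converges a.s.\ to $\e_\theta[\psi\mid\T]$, so $\psi$ has a genuinely $\T$-measurable version. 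Alternatively, choosing a subsequence $(m_i)$ with $\sum_i\sup_x\p_x(A_{m_i}^c)<\infty$, as in the paper's Lemma \ref{lem:inv_to_tail}, exhibits $\psi$ a.s.\ as $\lim_i(F\circ S^{m_i})$, whose $\limsup$ is $\F^\infty_{m_k}$-measurable for every $k$ and hence $\T$-measurable outright, avoiding the issue. With that point made explicit, the argument is complete.
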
 
 Note that if $p$ is steady, then $u(x)= {\bar u}(x,t)$ for all $t\in\Z_+$  hence the embedding in the theorem gives $u\in H^\infty(\s,p)$. 

To introduce the next result, recall that that the support of a Borel measure $\mu$ on a metric space $(M,d)$, $\mbox{Supp}(\mu)$,  is defined as 
$$ \mbox{Supp}(\mu) = \{y\in M: \mu (U) >0 \mbox{ if } U\mbox{ is open and }y\in U\}.$$ 
By definition, the support is closed and its complement is a $\mu$-null set.

   We say that a transition function $p$ on $\s$ has a locally finite range if for every $x\in\s$, the set $\{y\in\s:p(x,y)>0\}$ is finite.  
 \begin{theorem}
 	\label{th:martin}
 Suppose that $p$ is a transition function on $\s$ with locally finite range satisfying  Assumption \ref{as:required}, and  that 
  $\tau$ is  a stopping time on $\s$-valued sequences satisfying Assumption \ref{as:asymp}. 
  
  Let  $u\in H_+(\widehat{\s},{\widehat {p^\tau}})$ be such that  $\mbox{Supp}(\mu_u^{\widehat{p^\tau}})\subseteq \mbox{Supp}(\mu_{\bf 1}^{\widehat{p^\tau}})$. 

Then there exists a function ${\bar u}$ on $\s\times \Z_+$ such that 
\begin{enumerate}
	\item ${\bar u}(x,0)=u(x),~x \in \s$. 
	\item  ${\bar u}\in S_+(\s,p)$. 
	\end{enumerate}
 \end{theorem}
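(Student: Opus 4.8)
The plan is to bootstrap from the bounded case, Theorem~\ref{th:newmain}, by representing $u$ through the Martin boundary of the completed chain $\hpt$, using the support hypothesis to approximate $u$ by bounded harmonic functions, and then passing to a limit in which the locally finite range of $p$ keeps harmonicity alive. Since $\hpt$ satisfies Assumption~\ref{as:required} by construction, Theorem~\ref{th:doob59} gives $u=\int_{\partial_m(\widehat\s,\hpt)}K^{\hpt}(\cdot,\xi)\,d\mu_u(\xi)$ with $\mbox{Supp}(\mu_u)\subseteq\mbox{Supp}(\mu^{\hpt}_{\bf 1})$. For each $m$ I would partition $\mbox{Supp}(\mu_u)$ into finitely many Borel pieces $E_{m,1},\dots,E_{m,k_m}$ of diameter $<1/m$ (the Martin boundary is compact and metrizable), each containing a point $\xi_{m,i}$; because $\xi_{m,i}\in\mbox{Supp}(\mu^{\hpt}_{\bf 1})$, a small open ball $B_{m,i}\ni\xi_{m,i}$ of radius $<1/m$ has $\mu^{\hpt}_{\bf 1}(B_{m,i})>0$, so by Corollary~\ref{cor:isometry} the normalized partial harmonic measure $w_{m,i}(x)=\mu^{\hpt}_{\bf 1}(B_{m,i})^{-1}\int_{B_{m,i}}K^{\hpt}(x,\eta)\,d\mu^{\hpt}_{\bf 1}(\eta)$ is a bounded positive $\hpt$-harmonic function. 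Setting $u_m=\sum_i\mu_u(E_{m,i})\,w_{m,i}$ gives a bounded positive $\hpt$-harmonic function, and since each $K^{\hpt}(x,\cdot)$ is continuous on the compact boundary, hence uniformly continuous and bounded, a short estimate with its modulus of continuity yields $u_m(x)\to u(x)$ for every $x\in\widehat\s$. This is the content of the first approximation lemma of the Appendix, and the support hypothesis is exactly what makes the $w_{m,i}$ available --- without it this step, and the theorem, fail.

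Next I would apply Theorem~\ref{th:newmain} and pass to a limit. Every $y$ with $p^\tau(x,y)>0$ for some $x$ already lies in $\widehat\s$, so $u_m$ (and $u$ itself) extends uniquely to a positive $p^\tau$-harmonic function $\widetilde u_m$ on all of $\s$ by setting $\widetilde u_m(y)=\sum_z p^\tau(y,z)u_m(z)$ at the states outside $\widehat\s$; this preserves positivity and the sup-norm and gives $\widetilde u_m\to u$ pointwise on $\s$. Theorem~\ref{th:newmain} then produces $\bar u^{(m)}\in S^\infty(\s,p)$ with $\bar u^{(m)}(x,0)=\widetilde u_m(x)$, $\bar u^{(m)}\ge 0$, and $\|\bar u^{(m)}\|_\infty=\|\widetilde u_m\|_\infty$. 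As $\s\times\Z_+$ is countable, a diagonal argument extracts a subsequence along which $\bar u^{(m_j)}_n(y)$ converges in $[0,\infty]$ for every $(n,y)$, with limit $\bar u_n(y)$. Because $p$ has locally finite range, each identity $p\,\bar u^{(m_j)}_{n+1}=\bar u^{(m_j)}_n$ is a finite sum at every point and hence passes to the limit: $p\,\bar u_{n+1}=\bar u_n$ as an identity in $[0,\infty]$, with $\bar u_n\ge 0$ and $\bar u_0=u<\infty$.

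The step I expect to be the main obstacle is showing that the limiting sequence is everywhere finite. Iterating the harmonic-sequence identity gives $u=p^n\bar u_n$, i.e. $u(x)=\sum_y p^n(x,y)\bar u_n(y)<\infty$, which already forces $\bar u_n(y)<\infty$ for every $y$ that is reachable from some state in exactly $n$ steps; moreover, if $x$ is reachable in $n-1$ steps and $p(x,y)>0$ then $y$ is reachable in $n$ steps, so every harmonic-sequence equation at such an $x$ involves only finite values. The genuinely delicate point --- and the reason for the second approximation lemma of the Appendix, together with whatever extra structure the extension in Theorem~\ref{th:newmain} carries --- is to pin down $\bar u_n(y)$ at the states of $\s$ that are never reachable in $n$ steps, where the a priori bound from $u=p^n\bar u_n$ gives nothing and the norms $\|\bar u^{(m_j)}\|_\infty$ may be unbounded in $j$; one must choose the approximants $\bar u^{(m)}$ coherently (or argue directly) so that these values remain finite. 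Granting this, $\bar u=(\bar u_n)$ is the desired element of $S_+(\s,p)$ with $\bar u(\cdot,0)=u$, which proves Theorem~\ref{th:martin}.
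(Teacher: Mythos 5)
Your route is essentially the paper's: use the support hypothesis to approximate $u$ pointwise by bounded positive $\hpt$--harmonic functions (this is exactly what Proposition \ref{pr:approx} of the Appendix supplies, applied to $\mu_u^{\hpt}$ and $\mu_{\bf 1}^{\hpt}$; your construction with normalized harmonic measures of small balls is a correct variant of it), feed these approximants into Theorem \ref{th:newmain}, extract a pointwise-convergent subsequence, and use the locally finite range to pass the identity $p\,\ol u_{n+1}=\ol u_n$ to the limit. However, the step you yourself flag as the main obstacle --- finiteness of the limit at every $(y,t)$ --- is a genuine gap in your argument as written, and your guesses about how to close it are off target: the remaining Appendix result (Lemma \ref{lem:partition}) has nothing to do with finiteness (it only serves the measure-approximation step), and no ``coherent choice'' of the approximants is needed.

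The paper closes this gap with a pointwise a priori bound that you nearly wrote down but did not exploit. Positivity of each harmonic sequence gives $p(x,y)\,\ol u_n(y,t+1)\le \ol u_n(x,t)$ whenever $p(x,y)>0$, hence $\ol u_n(y,t+1)\le \ol u_n(x,t)/p(x,y)$; iterating this along a path of length exactly $t$ ending at $y$ --- the paper invokes irreducibility of $p$ to produce such a path, with vertices and transition probabilities not depending on $n$ --- yields $\ol u_n(y,t)\le c(y,t)\,\ol u_n(x(y,t),0)$ with $c(y,t)$ and $x(y,t)$ independent of $n$. Since $\ol u_n(\cdot,0)$ converges pointwise to $u$, the sequence $(\ol u_n(y,t))_n$ is bounded at every fixed $(y,t)$, so the diagonal subsequence converges to a finite limit everywhere; there is then no need to work in $[0,\infty]$, no issue with unbounded sup-norms, and no case distinction between states reachable and unreachable in $t$ steps (your worry about the latter is precisely what the appeal to irreducibility removes). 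Inserting this bound before your diagonal extraction turns your proposal into the paper's proof; the rest of your write-up, including the harmless one-step extension of the approximants from $\widehat\s$ to $\s$ before applying Theorem \ref{th:newmain}, is fine.
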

 The assumption on the support of $u$ is needed to ensure that one can approximate $u$ through bounded harmonic functions. We  need the local finite range assumption  to show that pointwise  limits of the approximating sequence are indeed harmonic sequences, avoiding a strict inequality in Fatou's lemma.  As for the assumption on the support of the measures, it is known that for random walks on regular trees the support of any harmonic function coincides with the minimal Martin boundary (\cite[Section 8]{Sawyer1997}). Nevertheless, the  assumption on the support of a positive harmonic function does not hold in general, even for transition functions with locally finite range  (\cite[Sections 6,7]{Sawyer1997}).

We now prove  two lemmas we will use to prove Theorem \ref{th:newmain}. The lemmas will be followed by the proof of the Theorem~ \ref{th:newmain} and the proof of Theorem~\ref{th:martin}.

  \begin{lemma}
\label{lem:inv_to_tail}
Under the conditions of Theorem \ref{th:newmain}, for  $A \in \partial({\widehat \s}, \widehat {p^\tau})$ there exists $I_A\in {\cal T} (\s^*,p^*)$ such that 
\begin{equation} 
\label{eq:inv_in_tail} \{\lim_{n\to\infty} y^*_n \in  A\} = I_A,~\p_x-a.s. \mbox{ for all }x\in {\widehat \s}-\{*\}.
\end{equation}  
Furthermore, if $A$ and $A'$ are disjoint,  $I_A$ and $I_{A'}$ are disjoint. 
\end{lemma}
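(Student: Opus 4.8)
The plan is to produce $I_A$ by first writing the event $\{\lim_n y^*_n\in A\}$ as a concrete path-space event $I_A^{(0)}$ for $\x^*$, and then showing that $I_A^{(0)}$ agrees $\p_\theta$-a.s.\ with a genuine tail event of $(\s^*,p^*)$. First I would note that for $x\in\widehat\s-\{*\}$ one has $\p_x=\p^*_x$ and $\tau^*=\tau$ a.s., so the transformed process $\y^*=(x^*_{\tau^*_n})_n$ is, under $\p_x$, distributed exactly as the $\widehat{p^\tau}$-chain started at $x$; since $\widehat{p^\tau}$ satisfies Assumption~\ref{as:required}, Theorem~\ref{th:limits} gives that $y^*_n$ converges $\p_x$-a.s.\ in the Martin topology of $\widehat{p^\tau}$ to a point of $\partial(\widehat\s,\widehat{p^\tau})$. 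Thus $I_A^{(0)}:=\{\x^*:\ \lim_n y^*_n(\x^*)\text{ exists in }M(\widehat\s,\widehat{p^\tau})\text{ and lies in }A\}$ is a well-defined $\mathcal F^\infty(\s^*)$-event, and $I_A^{(0)}=\{\lim_n y^*_n\in A\}$ modulo $\p_x$-null sets for every such $x$.

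The second step is the basic iteration identity for stopping times: from $\tau^*_{n+1}=\tau^*_n+\tau^*\circ S^{\tau^*_n}$ one gets by induction $\tau^*_n(\x^*)=\tau^*(\x^*)+\tau^*_{n-1}(S^{\tau^*}\x^*)$ for $n\ge 1$, hence $y^*_n(\x^*)=y^*_{n-1}(S^{\tau^*}\x^*)$; iterating $j$ times shows that the transformed path of $\x^*$ with its first $j$ entries deleted is the transformed path of $S^{\tau^*_j(\x^*)}\x^*$. Since the Martin limit of a sequence is unaffected by deleting finitely many terms, this yields $1_{I_A^{(0)}}(\x^*)=1_{I_A^{(0)}}(S^{t}\x^*)$ whenever $t$ is one of the values $\tau^*_0(\x^*),\tau^*_1(\x^*),\dots$; that is, $I_A^{(0)}$ is invariant under shifting by an arbitrary iterate of $\tau^*$. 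This alone does not place $I_A^{(0)}$ in the tail $\sigma$-algebra --- the iterates $\tau^*_j$ are not functions of the future --- and indeed $I_A^{(0)}$ need not be a tail event when $\tau$ fails Assumption~\ref{as:asymp}.

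The decisive step uses Assumption~\ref{as:asymp} (which applies to $\tau^*$ under each $\p_x$, $x\in\s$, since there $\tau^*=\tau$ a.s.). Fix $\rho$ as in that assumption and a sequence $n_k\to\infty$ with $\inf_{x}\p_x(E_{n_k})\to 1$, where $E_n:=\{\rho_n(\x^*)\in\langle\tau^*(\x^*)\rangle\}$ and $\rho_n=n+\rho\circ S^n$. On $E_{n_k}$ the number $t:=\rho_{n_k}(\x^*)=n_k+\rho(S^{n_k}\x^*)$ is an iterate of $\tau^*$ with $t\ge n_k$, so by the previous step $1_{I_A^{(0)}}(\x^*)=1_{I_A^{(0)}}(S^{t}\x^*)=g_{n_k}(S^{n_k}\x^*)$, where $g_n(\z):=1_{I_A^{(0)}}(S^{\rho(\z)}\z)$ (defined arbitrarily where the relevant limit fails to exist); the function $g_n\circ S^n$ is $\mathcal F_n^\infty=\sigma(\omega_n,\omega_{n+1},\dots)$-measurable because $S^{\rho(S^n\x^*)}(S^n\x^*)$ depends on $\x^*$ only through $S^n\x^*$. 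Hence $\big|1_{I_A^{(0)}}-g_{n_k}\circ S^{n_k}\big|\le 1_{E_{n_k}^{c}}$, so $g_{n_k}\circ S^{n_k}\to 1_{I_A^{(0)}}$ in $L^1(\p_x)$ uniformly in $x$, and therefore in $L^1(\p_\theta)$. For each fixed $m$, the functions $g_{n_k}\circ S^{n_k}$ with $n_k\ge m$ lie in $L^1(\p_\theta,\mathcal F_m^\infty)$, which is $L^1$-closed, so $1_{I_A^{(0)}}$ is $\overline{\mathcal F_m^\infty}$-measurable for every $m$; choosing representatives $D'_m\in\mathcal F_m^\infty$ with $\p_\theta(D'_m\triangle I_A^{(0)})=0$ and setting $I_A:=\limsup_m D'_m$, which lies in $\bigcap_m\mathcal F_m^\infty=\mathcal T(\s^*,p^*)$, one gets $\p_\theta(I_A\triangle I_A^{(0)})=0$, hence $I_A=\{\lim_n y^*_n\in A\}$ $\p_x$-a.s.\ for every $x\in\widehat\s-\{*\}$ since $\theta$ charges every point of $\s$. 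Finally, if $A\cap A'=\emptyset$ then $I_A^{(0)}\cap I_{A'}^{(0)}=\emptyset$ on the a.s.\ event that the limit exists, so $\p_\theta(I_A\cap I_{A'})=0$ and one replaces $I_{A'}$ by $I_{A'}\setminus I_A\in\mathcal T(\s^*,p^*)$ to obtain literal disjointness.

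The main obstacle is exactly this third step. The pathwise identities give invariance of $I_A^{(0)}$ only under the \emph{random} shifts $S^{\tau^*_j}$, which does not decouple the event from the initial coordinates; it is Assumption~\ref{as:asymp} that expresses $I_A^{(0)}$, up to an event of probability tending to $0$, as a function of $S^{n_k}\x^*$ alone, and the delicate points are the uniformity in the starting state needed to pass to $L^1(\p_\theta)$-convergence and the passage from ``$\overline{\mathcal F_m^\infty}$-measurable for all $m$'' to membership in the uncompleted tail $\mathcal T(\s^*,p^*)$.
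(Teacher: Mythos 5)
Your argument is correct, and it reaches the lemma by a genuinely different route than the paper. The paper first treats only basic sets $A=B_\epsilon(\zeta)\cap\partial(\widehat\s,\hpt)$: it runs a Borel--Cantelli argument on $K_{n}=\{\rho_n\notin\langle\tau\rangle\}$ along a subsequence, restarts the iteration of $\tau^*$ at the recovered times ($\rho_{i,0}=\rho_{n_i}$, $\rho_{i,j+1}=\rho_{i,j}+\tau^*\circ S^{\rho_{i,j}}$), and exhibits the explicit tail event $C=\limsup_i\bigcap_j\{x^*_{\rho_{i,j}}\in B\}$; it then passes from balls to open, compact, and finally arbitrary Borel sets using separability and the regularity of $\mu_{\bf 1}^{\hpt}$, absorbing the error sets into the completed tail $\sigma$-algebra. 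You instead handle an arbitrary Borel $A$ in one stroke: you use the a.s.\ Martin limit (Theorem \ref{th:limits}) to define the event $I_A^{(0)}$, prove its pathwise invariance under shifts by elements of $\langle\tau^*\rangle$ via $\tau^*_{j+n}=\tau^*_j+\tau^*_n\circ S^{\tau^*_j}$, and then use Assumption \ref{as:asymp} to approximate ${\bf 1}_{I_A^{(0)}}$ in $L^1(\p_\theta)$ by the $\mathcal F_{n_k}^\infty$-measurable functions $g\circ S^{n_k}$; $L^1$-closedness of the $\mathcal F_m^\infty$-measurable subspace plus the $\limsup$ of representatives $D'_m$ lands you in the uncompleted tail $\mathcal T(\s^*,p^*)$, exactly matching the statement. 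What your route buys: no ball-basis/regularity stage, no need for the identification $\{\widehat y_\infty\in A\}=\{\widehat y_n\in B\ \mbox{eventually}\}$ (which in the paper hides a small subtlety about the closure of the ball), uniform treatment of all Borel $A$, and a clean landing in the uncompleted tail; it is close in spirit to the referee's alternative argument sketched in the remark after Theorem \ref{th:newmain}, but carried out at the level of events rather than bounded harmonic functions. What the paper's route buys: a concrete, explicitly described tail event (which makes the disjointness claim immediate by construction) and only elementary Borel--Cantelli rather than functional-analytic closure arguments. The points you gloss (paths where some $\tau^*_j=\infty$ or some $y^*_n\notin\widehat\s$, and the choice of the subsequence $n_k$ from the $\limsup$ in Assumption \ref{as:asymp}) are all null-set or bookkeeping matters and do not affect the proof.
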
 
\begin{proof} 
We split the proof of the lemma into two parts. In the first part, we show that \eqref{eq:inv_in_tail} holds for all $A$ which are intersection of $\partial( \widehat\s, \hpt)$ with an open ball (in the Martin topology on $\widehat \s$ relative to the transition function $\hpt$),  centered at a point in $\partial( \widehat\s, \hpt)$. Once this is proved, we show how the lemma extends to all Borel (in the subspace topology)  subsets of $\partial(\widehat \s, \hpt)$. 

We begin with the first part. Let $B_\epsilon(\zeta)$ be a neighborhood of $\zeta$ in  $\partial( \widehat\s, \hpt)$
 with radius $\epsilon$. Since the topology on $\partial(\widehat \s, \hpt)$ is the induced topology from the compact metric space $\widehat\s\cup \partial( \widehat\s, \hpt)$, a basis for the topology on $\partial( \widehat\s, \hpt) $ is the collection of sets of the form

\begin{equation}
\label{eq:special_As} 
A = B_\epsilon(\zeta)\cap \partial( \widehat\s, \hpt)\mbox{ for some } \epsilon>0\mbox{  and }\zeta \in \partial( \widehat\s, \hpt).
\end{equation} 
 
 Fix  $\zeta$ in $\partial( \widehat\s, \hpt)
$ and $\epsilon>0$. Let  $A = B_\epsilon(\zeta)  \cap \partial( \widehat\s, \hpt)$ and let  $B = B_\epsilon(\zeta) \cap \widehat \s$.  Clearly, 
$$\{ \widehat y_\infty  \in A\}=\{x^*_{\tau_n}\in B \mbox{ eventually}\}$$ because by definition, $x^*_{\tau_n} = \widehat y_n$ for $n\in\Z_+$. Denote the event on the right hand side by $B_\infty$. Therefore instead of dealing with the event on the left hand side, we will work with $B_\infty$. Let 
$$
K_n = \left\{\x : \rho_n(\x) \not\in \langle \tau(\x)\rangle\right\}.
$$
 By assumption,  there exists a subsequence $(n_i:i\in\N)$ such that 
$$ \sum_{i} \p_x(K_{n_i})<\infty,~x\in \s.$$ 
Therefore the event $\Gamma=\{K_{n_i} \mbox{ finitely often}\}$ has $\p_x (\Gamma)=1$ for all $x\in \s$. Write $\Gamma_x = \Gamma \cap \{x_0=x\}$. For each $i$, let $\rho_{i,0} = \rho_{n_i}$, and continue inductively, 
$$
\rho_{i,j+1} = \rho_{i,j} + \tau^* \circ S^{\rho_{i,j}}.
$$
 Observe then that $\rho_{i,j}$ are all ${\cal F}_{n_i}^\infty(\s^*)$-measurable. Let 
$$
C_{i} = \bigcap_{j\in\Z_+} \left\{x^*_{\rho_{i,j}} \in B\right\}
$$
and 
$C=\limsup C_i =  \cap_{k=1}^\infty \cup_{i\ge k} C_i.$
 Therefore $\cup_{i\ge k} C_i \in {\cal F}_{n_i}^\infty(\s^*)$, and since this union is decreasing in $k$, it follows that $C$ belongs to ${\cal F}_{n_k}^\infty(\s^*)=S^{-n_k} ({\cal F}^\infty(\s^*))$ for all $k\in\N$, that is $C\in {\cal T} (\s^*)$. Clearly, on $\Gamma_x$, $C$ implies $B_\infty$, and conversely, on $\Gamma_x$, $B_\infty$ implies $C$. Therefore, 
$$ 
B_\infty = C,~\p_x-\mbox{ a.s.,}~x \in\widehat\s-\{*\}.
$$ 
This proves \eqref{eq:inv_in_tail} for the particular choice of $A$, with $I_A = C$. Note that by construction, if $A$ and $A'$ are disjoint, so are the corresponding $C$ and $C'$. 
 
We continue to the second part. Under the subspace topology,  $\partial(\widehat \s,\hpt)$ is a compact metric space. Therefore  it is separable and  every open set is a countable union of such sets from this basis, and every compact subset is a complement of such a countable union.  
In particular, it follows from the first stage that for any compact set $K\in \partial(\widehat \s,\hpt)$, there exists $I_K\in \ol\T(\s^*,{p^*})$, such that 
$$  \{\widehat y_\infty \in K\}= I_K,~\p_x-\mbox{a.s., for all }x\in \widehat\s-\{*\}.$$  

Note that  $\mu_{\bf 1}^{\hpt}$ is a probability measure on a compact metric space (due to extension explained below Theorem \ref{th:doob59}),  it is regular. So for a fixed  Borel set $A \subseteq\partial(\widehat \s,\hpt)$, there exists an increasing sequence $(Q_j:j\ge 1)$ such that for any $j$ the set $Q_j$ is a compact subsets of  $\partial( \widehat \s, \hpt)$ and $\mu_{\bf 1}^{\hpt} (A-Q_j) \to 0$. Now 
$ \{\widehat y_\infty \in \cup Q_j \}=\cup \{\widehat y_\infty \in Q_j\}$, and therefore, there exists $I_{H} \in {\cal T} (\s^*,p^*)$ such that 
$$ \{\widehat y_\infty \in \cup Q_j\} = I_H,\quad \p_x-\mbox{a.s.}\quad x \in \widehat \s - \{*\}.$$ 
We also observe that 
 $$\p_x ( \widehat y_\infty \in A -\cup Q_j) = \int_{A-\cup Q_j} K^{\hpt}(x,\zeta)d \mu_{\bf 1}^{\hpt} (\zeta)=0,$$ 
 for   $x \in \widehat\s$. 
Note that for $x=*$ or $x\in \s - \widehat \s$, then 
$$\p_x (y^*_\infty \in A - \cup Q_j) = \sum_{y\in \widehat\s-\{*\}} {\hpt}(x,y)\p_y(\widehat y_\infty \in A -\cup Q_j) =0.$$ 
By completeness, it follows that the event $\{y^*_\infty \in A - \cup Q_j\}$ is in $\ol{{\cal T}}(\s^*,p^*)$. From this we conclude that 
$$ \{\widehat y_\infty \in A\}= I_H\quad \p_x \mbox{-a.s.}\quad x \in \widehat\s-\{*\}.$$ 
This completes the proof of the second part, and of the lemma. 
\end{proof} 
\begin{lemma}
\label{lem:supremum}
For $x$ in $\s$ and $t$ in $\Z_+$, let $v(x,t) = \sum_{i=1}^N c_i \p_{x,t} (I_{A_i})$,  
where $c_i\ge 0$ and $A_i\in \partial( \widehat \s,\hpt)$ are disjoint and  $I_{A_i}$ is as in Lemma \ref{lem:inv_to_tail}. Then 
$$\|v\|_\infty = \sup_{x\in \widehat \s-\{*\}}\|v(x,0)\|_{\infty}.$$ 
\end{lemma}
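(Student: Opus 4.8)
The plan is to trap $\|v\|_\infty$ between two copies of $\displaystyle\sup_{x\in\widehat\s-\{*\}}v(x,0)$ (which, since $v\ge0$, is the quantity appearing in the statement). Since $c_i\ge0$ and each $\p_{x,t}(I_{A_i})\ge0$, the function $v$ is nonnegative, so $\|v\|_\infty=\sup_{(x,t)\in\s\times\Z_+}v(x,t)$; as $\widehat\s-\{*\}\subseteq\s$, the inequality $\sup_{x\in\widehat\s-\{*\}}v(x,0)\le\|v\|_\infty$ is immediate, and only the reverse requires work. I would first discard every $i$ with $\mu_{\bf 1}^{\hpt}(A_i)=0$: for such $i$, the proof of Lemma~\ref{lem:inv_to_tail} together with Theorem~\ref{th:limits} (applied to $\hpt$ on $\widehat\s$) give $\p_x(I_{A_i})=0$ for all $x\in\widehat\s-\{*\}$, so the bounded nonnegative $p$--harmonic sequence $(f_n)_n$ with $f_n(x)=\p_{x,n}(I_{A_i})$ vanishes on $(\widehat\s-\{*\})\times\{0\}$; since it corresponds under the isometry $S^\infty(\s,p)\cong L^\infty(\tb(\s,p))$ to a scalar multiple of ${\bf 1}_{I_{A_i}}$, which is $\p_\theta$--null, it is identically zero, and the summand can be removed from $v$. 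So from now on $\mu_{\bf 1}^{\hpt}(A_i)>0$ for every $i$.

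For the upper bound I would use disjointness: the $A_i$ being pairwise disjoint, so are the tail events $I_{A_i}$ (Lemma~\ref{lem:inv_to_tail}), hence so are their realizations in the space--time chain, so $\sum_i\p_{x,t}(I_{A_i})\le1$ for every $(x,t)$. Together with $\p_{x,t}(I_{A_i})\ge0$ this yields $v(x,t)\le\max_i c_i$ for all $(x,t)$, i.e. $\|v\|_\infty\le\max_i c_i$.

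For the lower bound I would compute $v(\cdot,0)$ on $\widehat\s-\{*\}$ explicitly. Fix $x\in\widehat\s-\{*\}$: the $(p^*)^{\tau^*}$--chain started at $x$ never leaves $\widehat\s-\{*\}$, so it is the $\hpt$--chain, whose Martin limit is $\widehat y_\infty$; by Lemma~\ref{lem:inv_to_tail} and Theorem~\ref{th:limits},
$$
v(x,0)=\sum_{i=1}^N c_i\,\p_x(I_{A_i})=\sum_{i=1}^N c_i\,\p_x(\widehat y_\infty\in A_i)=\int_{\partial_m(\widehat\s,\hpt)}K^{\hpt}(x,\zeta)\,\phi(\zeta)\,d\mu_{\bf 1}^{\hpt}(\zeta)=(T\phi)(x),
$$
where $\phi:=\sum_{i=1}^N c_i\,{\bf 1}_{A_i}\in L^\infty(\mu_{\bf 1}^{\hpt})$ and $T$ is the isometry of Corollary~\ref{cor:isometry} for $\hpt$. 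The $A_i$ being disjoint of positive $\mu_{\bf 1}^{\hpt}$--measure, $\|\phi\|_{L^\infty(\mu_{\bf 1}^{\hpt})}=\max_i c_i$, so Corollary~\ref{cor:isometry} together with $T\phi\ge0$ give $\sup_{y\in\widehat\s}(T\phi)(y)=\|T\phi\|_\infty=\max_i c_i$; and since $\hpt(*,\cdot)$ is a probability measure supported on $\widehat\s-\{*\}$, one has $(T\phi)(*)=\sum_{y\in\widehat\s-\{*\}}\hpt(*,y)(T\phi)(y)\le\sup_{y\in\widehat\s-\{*\}}(T\phi)(y)$, so the supremum over $\widehat\s$ is attained over $\widehat\s-\{*\}$. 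Hence $\sup_{x\in\widehat\s-\{*\}}v(x,0)=\max_i c_i$, and combining with the previous paragraph,
$$
\|v\|_\infty\le\max_i c_i=\sup_{x\in\widehat\s-\{*\}}v(x,0)\le\|v\|_\infty,
$$
which proves the lemma.

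The steps I expect to be delicate are the uniform upper bound and the reductions in the first two paragraphs: one must pin down exactly how $\p_{x,t}$ evaluates the $p^*$--tail events $I_{A_i}$, so that their pairwise disjointness is preserved in the space--time chain (keeping $\sum_i\p_{x,t}(I_{A_i})\le1$ for every $t$, not just for $t=0$), and one must check that a boundary set of zero $\mu_{\bf 1}^{\hpt}$--measure produces the zero harmonic sequence. The identity $v(\cdot,0)|_{\widehat\s-\{*\}}=T\phi$ and the final sandwich estimate are then straightforward consequences of Lemma~\ref{lem:inv_to_tail}, Theorem~\ref{th:limits} and Corollary~\ref{cor:isometry}.
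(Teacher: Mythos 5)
Your proof follows the paper's argument in its essentials: the inequality $\sup_{x\in\widehat\s-\{*\}}v(x,0)\le\|v\|_\infty$ is immediate, the uniform bound $v(x,t)\le\max_i c_i$ comes from the pairwise disjointness of the events $I_{A_i}$ supplied by Lemma~\ref{lem:inv_to_tail}, and the evaluation of $\sup_{x\in\widehat\s-\{*\}}v(x,0)$ uses Theorem~\ref{th:limits} (the paper invokes it directly; you route it through the isometry $T$ of Corollary~\ref{cor:isometry}, which is the cleaner way to see that the supremum actually reaches the top coefficient). Your one genuine addition is the preliminary step discarding the indices with $\mu_{\bf 1}^{\hpt}(A_i)=0$. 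The paper does not do this and simply asserts $\sup_x v(x,0)=\max_i c_i$; as you implicitly observed, Corollary~\ref{cor:isometry} only yields the maximum of $c_i$ over the indices of positive $\mu_{\bf 1}^{\hpt}$-measure, so your reduction is exactly what makes the sandwich $\|v\|_\infty\le\max_i c_i=\sup_x v(x,0)\le\|v\|_\infty$ legitimate when a null $A_i$ carries the largest coefficient.

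That reduction, however, contains one under-justified claim: to kill the summand via the isometry $S^\infty(\s,p)\cong L^\infty(\tb(\s,p))$ you need ${\bf 1}_{I_{A_i}}$ to be $\p_\theta$-null, i.e.\ $\p_x(I_{A_i})=0$ for \emph{every} $x$ in the support of $\theta$, which is all of $\s$; you only establish this for $x\in\widehat\s-\{*\}$, and for a tail event the vanishing does not formally transfer to the remaining states (conditioning on one step of $p$ produces the space--time quantities $\p_{y,1}(I_{A_i})$, not $\p_y(I_{A_i})$). The gap is real but short to close: by the construction in Lemma~\ref{lem:inv_to_tail}, the Borel--Cantelli event $\Gamma$ has full $\p_x$-measure for every $x\in\s$ (Assumption~\ref{as:asymp} has the infimum over $x$), and on $\Gamma$ the event $I_{A_i}$ coincides with an event expressed through the transformed chain $(x_{\tau_n})_n$ that is insensitive to finitely many initial terms of that chain; hence the strong Markov property at $\tau_1$ gives $\p_x(I_{A_i})=\sum_{y\in\widehat\s-\{*\}}p^\tau(x,y)\,\p_y(I_{A_i})=0$ for every $x\in\s$, since $p^\tau(x,\cdot)$ charges only $\widehat\s-\{*\}$. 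With that sentence added your argument is complete, and in fact somewhat more careful than the printed proof.
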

\begin{proof}
  Clearly, $\|v \|_\infty \ge \|v(\cdot,0)\|_\infty$. 
  On the other hand,
  $$ v(x,0) = \sum c_i \p_x (y^*_\infty \in A_i),$$ therefore by Theorem \ref{th:limits}, $\sup_{x \in \widehat \s-\{*\}} v(x,0)=\max c_i$,
  while
  $$ v(x,t) = \sum c_i \p_{x,t} (I_{A_i}) \le \max c_i,$$ 
  because $I_{A_i}$ are disjoint, $\p_{x,t}$-a.s.  for all $(x,t)\in \s^*\times \Z_+$.   
\end{proof} 
\begin{proof}[Proof of Theorem \ref{th:newmain}] 
Without loss of generality, we can assume that $\theta$ is such that  $\sum_{x\in \s} \theta (x) u(x)<\infty$. Thus, extend $u$ to $\s^*$ by letting $u(*) = \sum_{x\in \s} \theta (x) u(x)$, and let $\widehat u$ be the restriction of $u$ to $\widehat \s$. By assumption, there exists $0\le f\in L^1(\mu_1^{\widehat {p^\tau}})$ such that 
$$
\widehat u (x) = \int_{\partial_m( \widehat{\s} , \widehat{p^\tau})}  K^{\widehat{p^\tau}}(x,\zeta) f (\zeta) d \mu_1^{\widehat {p^\tau}}(\zeta).
$$ 
There exists  a nondecreasing sequence of nonnegative simple functions   $(f_n:n\in\N)$ on $\partial_m( \widehat{\s} , \widehat{p^\tau})$ such that $f_n \nearrow f$. Letting 
$$\widehat u_n (x) =  \int_{\partial_m( \widehat{\s} , \widehat{p^\tau})} K^{\widehat{p^\tau}}(x,\zeta) f_n (\zeta) d \mu_1^{\widehat{p^\tau}}(\zeta),$$
it follows from Lemma \ref{lem:inv_to_tail} that there exists a combination $\sum c_i {\bf 1}_{I_{A_i}}$, $I_{A_i}\in {\cal T} (\s^*,p^*)$, such that
$$ \widehat u_n (x) = \sum c_i \p_{x,0} (I_{A_i}),\quad x \in \widehat \s-\{*\}.$$

Let $v_n(x,t) = \sum c_i \p_{x,t} (I_{A_i}),~(x,t)\in \s^*\times \Z_+$. Then $v_n \in  S^\infty(\s^*,p^*)$.  
Since by Lemma \ref{lem:supremum},  
$\|v_n\|_\infty\le \sup_{x\in \widehat \s-\{*\}} \|v_n(x,0)\|_\infty\le \|\widehat u\|_\infty$, we can extract a subsequence $(v_{n_k})$ which converges pointwise. Clearly, $v_\infty (x,0)=u(x)$ on $\widehat \s-\{*\}$. However, it also follows from dominated convergence that  $v_\infty \in S^\infty(\s^*,p^*)$. Furthermore, 
$\|v_\infty\|_\infty \le \|\widehat u\|_\infty$.
\end{proof} 

\begin{remark} Here is an outline of an alternative proof to Theorem  \ref{th:newmain}, based entirely on the construction of the Poisson boundary presented in Section \ref{sec:Poisson} and  avoiding the notion of Martin boundary. The proof was suggested  by the referee. \\ 

Let $u\in H^\infty(\s,p^{\tau})$, and let $\phi_u$ be the element in $L^\infty({\cal PB}(\s,p^\tau))$ obtained through \eqref{eq:martin-approach}:
\begin{equation}
\label{eq:y_embed}
\phi_u ({\y}) = \lim_{n\to\infty} u(y_n), \quad \p_{\theta}\mbox{-a.s.},
\end{equation}
Now $\y$ is a deterministic function of $\x$: $\y = \y(\x)$ through $y_n = x_{\tau_n}$, and therefore one can rewrite the lefthand side as a function of $\x$, ${\widetilde{\phi_u}}(\x) = \phi_u(\y(\x))$. 
The  Borell-Cantelli argument in the heart of Lemma \ref{lem:inv_to_tail} gives a sequence $(\rho_{n_i}:i=1,2,\dots)$ of ${\cal F}_{n_i}^\infty(\s)$-measurable random variables with $n_i \nearrow \infty$, and $\rho_{n_i}  \in \langle \tau \rangle$, eventually $\p_{\theta}$-a.s.  Therefore the righthand side of \eqref{eq:y_embed} is  $\lim_{i\to\infty} u(x_{\rho_{n_i}})$,  $\p_{\theta}$-a.s., and is therefore  $\ol{{\cal T}}(\s,p)$-measurable. Thus, we have obtained an embedding 
$$H^{\infty}(\s,p^\tau) \ni u \hookrightarrow \widetilde{\phi_u} \in L^\infty ({\cal TB}(\s,p)).$$
\end{remark} 

We now prove Theorem \ref{th:martin}. 
\begin{proof}[Proof of Theorem \ref{th:martin}]
	By the assumption on  the support of $\mu_u^{\widehat p^\tau}$, there exists a sequence $f_n\in L^\infty (\mu_{\bf 1}^{\widehat {p^\tau}})$ such that $f_n d\mu_1^{\widehat{p^\tau}}$ converges weakly to $\mu^{\widehat {p^{\tau}}}_{u}$ (see Proposition \ref{pr:approx}). Since for each $x\in \widehat \s$, the  mapping $\zeta \to  K^{\widehat{p^\tau}}(x,\zeta)$ is bounded and continuous on the compact metric space $\partial ({\widehat \s},{\widehat {p^\tau}} )$, it follows that each of the functions 
	$$u_n (x) = \int K^{\widehat{p^\tau}}(x,\zeta) f_n (\zeta) d\mu_{\bf 1}^{\widehat {p^\tau}},~x\in{\widehat \s}$$ 
	is in $H^\infty(\widehat{\s},\widehat{p^\tau})$ and  the sequence $(u_n :n\in\N)$  converges pointwise to $u$. By Theorem \ref{th:newmain}, there exists a function $0\le \ol{u}_n \in S^\infty(\s,p)$ such that  $\ol{u}_n(x,0)=u_n (x),~x \in \s$.   
	Since $p \ol{u}_n(x,t+1)=\ol{u}_n (x,t),~x\in \s$,  it follows that $p(x,y) \ol{u}_n(y,t+1)\le \ol{u}_n(x,t)$ whenever $p(x,y)>0$, or 
	$$ \ol{u}_n (y,t+1)\le \inf \{ \frac{\ol{u}_n(x,t)}{p(x,y)}: x~, p(x,y)>0\}.$$
	Iterating, and using the fact that $p$ is irreducible, we have that for every $(y,t)\in \s\times \Z_+$ there exist $x=x(y,t)\in \s$ and a constant $c(y,t)>0$, both not depending on $n$, such that 
	$\ol{u}_n(y,t) \le c(y,t)\ol{u}_n(x(y,t),0)$. Since $\ol{u}_n (x,0)$ converges to $u(x)$ as $n\to\infty$ for every $x\in \s$,  it follows that the sequence of nonnegative numbers $(\ol{u}_n (y,t):n\in\N)$ is bounded. As a result, there exists a subsequence $(n_j:j\in\N)$ such that $\ol{u}_{n_j}(x,t)$ converges to a finite limit at all $(x,t)\in \s\times\Z_+$. Denote this limit function by $\ol{u}$. Clearly, $\ol{u}(x,0)=u(x)$. Since $p$ is locally finite, 
	$$ p \ol{u} (x,t+1)= \sum_{y} p(x,y)\lim_{j\to\infty} \ol{u}_{n_j} (y,t+1) = \lim_{j\to\infty} p \ol{u}_{n_j}(x,t+1)=\lim_{j\to\infty}\ol{u}_{n_j}(x,t)= \ol{ u}(x,t),$$ 
	completing the proof. 	 	  
\end{proof}

\section{Examples}\label{sec:examples}
In this section, we provide some examples.
\subsection{Deterministic Stopping times} 
Suppose that $\tau=c$. Let $\rho = 1$, we have that $\p_x (\rho_n \in \langle \tau \rangle) =1$ whenever $n+1$ is multiple of $c$.

\subsection{First Passage times}\label{sec:hitting}
Let $A$ be a recurrent set for $p$ and $\tau = \inf\{n \ge1: x_n \in A\}$. Setting $\rho = \tau$ satisfies the condition of Assumtption \ref{as:asymp}. This is the generalization of Furstenberg's result for random walks on groups,  when $A$ is a recurrent subgroup  \cite{Furstenberg1973}.
We will now show how this can be used to study equivalence of bounded harmonic functions on product spaces. Suppose that $p$ is an irreducible and  transient  transition function on $X \times Y$, where $X$ and $Y$ are nonempty countable sets. $X$ may be finite. We will  assume that there exists $o\in X$ such that the set  $A=\{o\}\times Y$ is $p$--recurrent. Write $\x=(\x(1),\x(2))$ for the corresponding Markov chain,  where $\x(1)$ is an $X$-valued process, and $\x(2)$ is a $Y$-valued process. Note that in general neither $\x(1)$ nor $\x(2)$ are Markov chains. Let $\tau$ denote the first passage time to $A$: 
$$ \tau = \inf\{t\ge 1: {x}_t(1) = o\}.$$ 
Then $\rho=\tau$ satisfies Assumption \ref{as:asymp}.
Observe next that $p^\tau$ induces a transition function $\gamma$  on $Y$ through the relation $$\gamma(y,y') = p^{\tau}((o,y),(o,y')),~x,y\in Y.$$
 Given $x\in X$ and $y \in Y$, let $v((x,y)) = \e_{(x,y)} [ u({y}_{\tau})]$. Clearly,  $v((o,y)) = \gamma u (y) = u(y)$. Therefore, $p^{\tau}v((o,y))=\gamma u(y) = u(y) = v ((o,y))$. Next, if $x\ne o$, we have 
$$p^\tau v ((x,y)) = \sum_{y'}p^\tau((x,y),(o,y')) \e_{(o,y')} [ u({ y}_{\tau})]=\sum_{y'}p^{\tau}((x,y),(o,y')) u(y')=v((x,y)).$$  
      
One easy example is $X=\Z$,  $Y=\Z^{d-1}$, $d\ge 3$,  and   $p$ being the simple symmetric random walk on $X \times Y = \Z^d$ and $o=0$.  It is known that the Markov chain is steady, and that all bounded harmonic functions are constants, so in particular, $S^\infty(\s,p)$ consists only of constant functions. Furthermore, the first (or any component) is recurrent. Thus  $\gamma$ is a transition function on  $\Z^{d-1}$ which is symmetric, but not nearest neighbor (in fact, it is easy to see that $\sum_{y\in \Z^{d-1}} \gamma(0,y)|y|=\infty$). From Theorem \ref{th:newmain} we therefore obtain that all bounded harmonic functions for $\gamma$ are constants. 

\subsection{Additive functionals} 
Recall that an additive functional for a Markov chain is a real-valued process $I= (I_n:n \ge 0)$, such that $I_{n+k} = I_n + I_k \circ S^n$ and $I_k$ is measurable with respect to the sigma-algebra generated by the first $k+1$ coordinate functions for all $k$ (enough for $k=0,1$). An example for an additive functional is $I_n = \sum_{k\le n} f(x_k)$ where $f:{\s}\to {\mathbb R}$ is any function. \\
Let $(I_n:n \ge 0)$ be an additive functional for $\x$ that satisfies $\displaystyle\lim_{n\to\infty} I_n = \infty$ $\p_x$-a.s. Let $\tau = \inf \{n\ge 1: I_n > I_{n-1}\}$. Setting $\rho = \tau$ clearly satisfies the condition. Note that the above example is a special case, with the additive functional counting the number of visits to $A$. On the other hand, letting $\tau = \inf \{n: I_n \ge a\}$ for some fixed $a$, in general does not satisfy the condition.

\subsection{A generic choice for $\rho$} 
Let $\s$ be a  free semigroup generated by the finite nonempty set ${\cal G}$. That is, the elements of ${\s}$ are finite sequences of elements in ${\cal G}$, the empty sequence included, denoted by $\emptyset$. Given $x \in {\s}$, we write $ x g$ for the sequence in ${\s}$ obtained by concatenating  $g$ to $x$ from the right.  If $y=xg$, we write $g=x^{-1}y$  and refer to $g$ as the increment. Assume that for any $x \in \s$ and $g\in {\cal G}$ the transition function $p$ is invariant under the action of semigroups that is  $p(x,xg) = p_g$, where $g\to p_{g}$ is any probability measure on ${\cal G}$. 
We will also assume that $\tau$ is a stopping time invariant under the action of semigroup in the following sense
\begin{equation} 
\label{eq:invariant_tau}\tau (\omega_0,\omega_1, \dots) =\tau (x\omega_0,x\omega_1, \dots) = \tau(\emptyset, \omega_0^{-1}\omega_1,\omega_0^{-1} \omega_2,\dots)
\end{equation}
for all $x$ in the semigroup $\s$.
That is, $\tau$ is  a function of the consecutive increments rather than the actual path.
Let $\tau$ satisfy the following two additional conditions: 

\begin{itemize}
\item[i)] $\tau$ is bounded by $M \in \Z_+$; 
\item[ii)] $\p_\emptyset (\tau =1)> 0$. 
\end{itemize}
Let $A=\{g\in {\cal G}:\tau(\emptyset,g)=1\}$. 
Observe that from assumption (ii), $\sum_{g\in A} p_g>0$, and therefore $\p_x$-a.s., given  a path $(\omega_0,\omega_1,\dots)$, its associated sequence of increments $(\omega_0^{-1}\omega_1,\omega_1^{-1}\omega_2,\dots)$ contains infinitely many  runs (of consecutive increments) in $A$ longer than any fixed $k$. By (i), any ``time'' interval of length longer than $M$ contains at least one element in $\langle \tau\rangle$ before its last element. If, in addition, all increments corresponding to this time interval are in $A$, it necessarily follows that the last element is in $\langle \tau \rangle$. This simple idea translates to the following definition of $\rho$:
$$\rho(\x) = \inf\left\{n>3M: x_{i-1}^{-1}x_i \in A,~\mbox{ for all } i =n-2M,\dots,n  \right\}.$$
From the definition of the random walk, $\rho$ is finite $\p_x$-a.s. for any $x\in \s$. Also, because $\tau \le M$, between time   $\rho_n - 2M$ and $\rho_n$ there exists at least one element of $\langle \tau \rangle$, call the first such element $\tau_m$. Since all consecutive increments until time $\rho_n$ are in $A$,  all elements of the sequence $(\tau_m, \tau_m+1, \dots, \rho_n)$ are in $\langle \tau \rangle$.  Thus  $\p_x (\rho_n \in \langle \tau \rangle )=1$ and Assumption \ref{as:asymp} is satisfied. 

The second author and Kaimanovich show that the Poisson boundary of random walks on countable groups preserved under any stopping times. Moreover, they show that the results hold for randomized stopping times with finite logarithmic moment. They first lift random walks on a countable group to a random walk on a free finitely or infinitely countable generated) semigroup whose first step of the random walk is distributed on the generators of the free semigroup, and show their results in this setup and then apply them to prove the results for any countable groups, see \cite{Behrang} and \cite{Forghani-Kaimanovich2016} for more details. However, our result in the above example provides a different proof for special stopping times on finitely generated free semigroups.

\subsection{ $\boldsymbol\rho$ is not $\boldsymbol\tau$}
We want to show that also in general choosing $\rho=\tau$ will not satisfy Assumption~\ref{as:asymp}.  Let $\s$ be the free  semigroup generated by ${\cal G}=\{a,b\}$. Then for every $x$ we define  transition function   on ${\s}$ by $$ p(x,xa)=p(x,xb)=\frac12.$$ 
For any sample path $\x=(x_0,x_1,x_2,\cdots)$ and $x$ in $\s$, define the stopping time  
$$\tau(\x)=\begin{cases}
1 & x_0=x,\ x_1=xa\\
2 & x_0=x,\ x_1=xb.
\end{cases}$$

We claim that $\tau$ does not satisfy Assumption~\ref{as:asymp}.  By contradiction,  suppose that  $\rho=\tau$. 
 Then 
\begin{align*} 
	 \p_x (\rho_n \in \langle \tau \rangle )&=\p_x(\rho_n \in  \langle \tau \rangle | n \in \langle \tau \rangle  )\p_x( n \in \langle \tau \rangle)+\p_x(\rho_n \in  \langle \tau \rangle | n \not \in \langle \tau \rangle  )\p_x( n \not\in \langle \tau \rangle)\\
	 &= 1 \times \p_x( n \in \langle \tau \rangle) + \frac 34 (1- \p_x( n \in \langle \tau \rangle) )\\
	 & = \frac 14(3+\p_x( n \in \langle \tau \rangle)),
	\end{align*}
	where the $\frac 34$ factor on the second line is because if $n \not\in \langle \tau \rangle$, then $n+1 \in \langle \tau \rangle$ and so  $\rho_n\in  \langle \tau \rangle$ if and only if either: i) $\rho_n=n+1$, that is $x_n^{-1}x_{n+1}=a$; or ii)  $\rho_n=n+2$ and $n+2\in \langle \tau \rangle$, that is  $x_{n}^{-1}x_{n+1} = b$ and $x_{n+1}^{-1}x_{n+2}=a$. Finally, letting $\alpha_n = \p_x (n \in \langle \tau \rangle)
$, we have $\alpha_1=\frac 12$, $\alpha_2= \frac 34$ and by conditioning on the first increment, for $n\ge 3$, $\alpha_n=\frac 12 \alpha_{n-1}+ \frac 12 \alpha_{n-2}$. As a result, $\lim_{n\to\infty}\alpha_n = \frac 23$. 	It follows that 
$$ \displaystyle\lim_{n\to\infty}\p_x (\rho_n \in \langle \tau \rangle )= \frac{11}{12}<1.$$ 

\subsection{Delayed stopping} 
We construct a Markov chain and a corresponding stopping time with the following property. There exist states such that for every choice of  $\rho$, $\p_x (\rho_n \in \langle \tau \rangle)<1$, for all $n$ large. Nevertheless,  we can define   $\rho$ so that   $\displaystyle\lim_{n\to\infty} \p_x (\rho_n \in \langle \tau\rangle) =1$ for all $x\in \s$. \\
 Let ${\s}={\mathbb N}\times \{0,1\}$. Let $r$ be any irreducible and transient transition function on ${\mathbb N}$. We will also assume that $r$ is lazy, that is $r(x,x) =\frac 12$ for all $x$.  Let $s$ be a transition function on $\{0,1\}$ such that  $0$ is an absorbing state, $s(0,0)=1$, and $s(1,0)=s(0,1)=\frac12$.
 For $n_1,n_2 \in {\mathbb N}$ and $d_1,d_2 \in \{0,1\}$, let 
$$
p((n_1,d_1),(n_2,d_2)) = r(n_1,n_2)s(d_1,d_2).
$$
The space of sample paths of the Markov chain  with transition function  $p$ can be expressed in terms of the component processes. That is, if $\x$ denotes that chain, then  $\x=(\x(1),\x(2))$, with $\x(1)$ and  $\x(2)$ are two independent sample paths with respect to $r$ and $s$, receptively.  Assume that $A$ and $B$ are disjoint recurrent sets for $r$. Then clearly, both $A$ and $B$ are infinite.  Let $\sigma = \inf\{n\ge 1: x_n(1) \in A\}$, and continue inductively $\sigma_1=\sigma,~ \sigma_{n+1} = \sigma\circ {S^{\sigma_n}}$.  Define  
$$
\tau = \begin{cases} 
\sigma & x_0(2) =0\\ \sigma_{\sigma} & x_0(2)=1 
\end{cases}
$$ 
In other words, if $x_0(2)=0$, we stop when $\x(1)$ hits $A$. Otherwise, we wait until $\sigma$, and then stop after $\x(1)$ hits  $A$ an additional $\sigma-1$ more times. Let $m$ be in $B$. Suppose that $\rho:{\s}^{{\mathbb Z}_+}\to {\mathbb Z}_+$. 
We begin by observing that 
$$\p_{(m,1)}(\rho_n \in \langle \tau\rangle) \le \p_{(m,1)}(\sigma \le n) + \p_{(m,1)} (\rho_n \in \langle \tau\rangle,\sigma>n).$$
Now since $x_0(2)=1$, it follows from the definition of $\tau$, that under $\p_{(m,1)}$, we have $\tau=\sigma_{{\sigma}}$. On the event $\sigma>n$, this automatically implies $\tau_1>n$, and in particular, if $\rho_n \in \langle \tau\rangle $, we must have $\rho_n > \sigma+ (\sigma-1) > 2n$. As a result, the event $\{\rho_n \in \langle \tau \rangle\}\cap  \{\sigma>n \} $ is contained in the event $\{\rho \circ S^n > n\}\cap \{\sigma>n \}$. This, with the Markov property imply:  
\begin{align} \nonumber \p_{(m,1)}(\rho_n \in \langle \tau\rangle) &\le  \p_{(m,1)}(\sigma \le n)  + \p_{(m,1)}(\rho\circ S^n>n,\sigma>n) \\
\label{eq:as} 
& =  \p_{(m,1)}(\sigma \le n) +  \e_{(m,1)} [\p_{\x_n}(\rho>n),\sigma>n].
\end{align}
Since we assume that $\rho$ is finite $\p_{(m,0)}$ a.s., there exists $n_0\in\N$ such that  for all $n\ge n_0$,  $\p_{(m,0)}(\rho>n) <\frac 12$. Under $\p_{m,1}$, the event 
$$
C_n=\{x_0(1)=x_1(1)=\dots=x_n(1)\}\cap \{x_n(2) = 0\}
$$ has probability $2^{-n} \times (1-2^{-n})$, and is  contained in  $\{\sigma>n\}$. Thus for $n\ge n_0$, 
\begin{align*} 
 \e_{(m,1)} [\p_{\x_n}(\rho>n), \sigma>n] &\le  \p_{(m,1)} ( C_n)\times  \p_{(m,0)}(\rho>n)+ \p_{(m,1)} (C_n^c,\sigma>n) \\
 & <\frac 12 \p_{(m,1)}(C_n) +  \p_{(m,1)} (C_n^c ,\sigma>n)\\
 & =  \frac 12 \p_{(m,1)}(C_n) + \p_{(m,1)} (\sigma>n) - \p_{(m,1)}(C_n,\sigma>n)\\
 & =\p_{(m,1)} (\sigma>n)  - \frac 12 \p_{(m,1)}(C_n),
 \end{align*}
 where the last equality is due to the fact that under $\p_{(m,1)}$, $C_n \subseteq \{\sigma>n\}$. Plug this inequality into  \eqref{eq:as} to obtain 
 $$  \p_{(m,1)}(\rho_n \in \langle \tau\rangle) \le 1 - \frac 12 2^{ -n} (1-2^{-n}),~n\ge n_0.$$ 
 
Next we show that choosing $\rho=\sigma$ satisfies $\displaystyle\lim_{n\to\infty} \p_{x} (\rho_n \in \langle \tau \rangle) =1$, for all $x$ (note:  we can also choose $\rho=\tau$). Let $N_{i,j}$ count the number of times $\x(1)$ visits $A$ between times $i$ and $j$. That is, 
$$
N_{i,j} = \sum_{i \le n \le j} {\bf 1}_A (x_n(1)).
$$
Observe that from the definition of $\tau$, if $x^2_k =0$, then the times of the $k$-th, $k+1$-th, etc. hits of $A$ by $\x(1)$ will all be in $\langle \tau \rangle$. Letting $T = \inf \{n: \x_n(2) =0\}$, we have 
$$ \p_x (\rho_n \in \langle \tau \rangle) \ge \sum_{k} \p_x ( T = k,N_{k,n}\ge k).$$ 
Since $A$ is recurrent, $ N_{k,n}\underset{n\to\infty}{ \nearrow}  \infty$, and so the result follows from monotone convergence. 

\subsection{Splitting of Poisson Boundary}\label{splitting}
This final example is of a transition function $p$ and a stopping time  where $S^\infty (\s,p)$ consists only of constant functions, yet $H^\infty(\s,p^{\tau})$ contains at least two linearly independent elements. In particular, there does not exist $\rho$ satisfying Assumption \ref{as:asymp}.\\
 Let $p$ be the transition function of the nearest neighbor symmetric random walk on $\s=\Z^d$, $d\ge 3$ (the assumption on the dimension is to ensure transience of $p$). By Ney and Spitzer \cite{Ney-Spitzer}, both the  Poisson and Martin boundary with respect to $p$ are  trivial: all harmonic functions are constants. Write ${ x}=({ x}(1),\dots, { x}(d))$. Define stopping times $T_{+},T_-,T_0$ as follows: 
$$T_+ = \inf\{n \ge 1:{ x}_n(1)={x}_{0}(1)+ 1\} \hspace{1cm} T_- = \inf\{n \ge 1:{ x}_n(1)={x}_{0}(1)- 1\}$$
and 
$$ T_0=\inf\{n\ge 1:{x}_n(1) ={ x}_0(1)\}.$$ 
Finally, let $T_{+,2} = T_{0}\circ S^{T_{+}}$ and $ T_{-,2} = T_{0}\circ S^{T_{-}}$.  In words, $T_\pm$ is the first time ${x}(1)$ is one unit to the right (for $+$) or to the left (for $-$) of its starting point, and $T_{\pm,2}$ is the second time ${x}(1)$ is one unit to the right ($+$) or one unit to the left ($-$) from its starting location. 

Set 
$$
\tau=\begin{cases} T_+ \wedge T_{-,2}\mbox{ if } x_0(1)\ge 0 \\ T_-\wedge T_{+,2}\mbox{ if } x_0(1) < 0. \end{cases}
$$

Observe that by symmetry, $\p(T_+<T_-)=\frac 12$, and $\p(T_+< T_{-,2})= \frac 12+ \frac 12 \times \frac 12 \times \frac 12=\frac 58$. It immediately follows that $x_{\tau}(1)$ is a nearest neighbor Markov chain on $\Z$ with the following transition probabilities: 
$$ 
r(x,y)= \begin{cases} \frac 58 & x \ge 0,~ y= x+1 \mbox{ or } x<0, ~y=x-1 \\ \frac 38 & x\ge 0,~ y=x-1\mbox{ or } x<0,~ y=x+1.\end{cases}
$$ 

In particular, ${ x}^{\tau}(1)$ is transient with positive drift on the positive half line and negative drift on the negative half line. Clearly,  $\{\lim  { x}_{\tau_n}(1)=+\infty\}$ is invariant with respect to $\x^{\tau}$, therefore the function $\p_x (\lim {x}_{\tau_n}(1) = +\infty)$ is a $p^{\tau}$-harmonic function. It is easy to see that this function is not constant. Therefore the conclusion of Theorem \ref{th:newmain} does not hold, which in turn implies that there does not exist a $\rho$ satisfying Assumption \ref{as:asymp}. Note that with this construction, $p^\tau$ is irreducible.   As simpler, yet not irreducible example can be obtained by  defining $\tau$ as the first time $x(1)$ is one unit away from $0$ than where it started at. In this case, $x_{\tau}(1)$ is the Markov chain on $\Z$ which jumps from $0$ to $\pm 1$ with probability $1/2$ each, and jumps from $x$ to $x+\mbox{sgn}(x)$ with probability $1$ for all $x\ne 0$.
\section{Appendix}
\begin{lemma}
	\label{lem:partition}
	Let $\mu$ be a Borel measure on a compact metric space $(M,d)$. Then for  every $\delta$, there exists a number $n\in\N$ and Borel subsets  $N_0,N_1,\dots,N_n$ of $M$ such that 
	\begin{enumerate} 
		\item $\cup_{j=0}^n N_j = M$. 
		\item $N_j \cap N_i =\emptyset$ for $i\ne j$. 
		\item $N_0$ is contained in the complement of $\mbox{Supp}(\mu)$. 
		\item For $j=1,\dots,n$, the diameter of $N_j$ is $< \delta$.  
		\item $N_j,~j=1,\dots,n$ contains a neighborhood of an element in the support of $\mu$. 
	\end{enumerate} 
\end{lemma}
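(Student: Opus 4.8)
The plan is to build the partition from a finite \emph{separated net} inside $\mbox{Supp}(\mu)$, reserving one small open ball around each net point to force condition (5), and distributing the rest of $M$ among the pieces in a measurable, greedy way.

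First I would record that $S:=\mbox{Supp}(\mu)$ is closed in $M$, hence compact and totally bounded. (If $S=\emptyset$, i.e.\ $\mu\equiv 0$, simply take $N_0=M$, so assume $S\ne\emptyset$.) Fix $r:=\delta/3$ and choose a \emph{maximal} $r$-separated subset $\{s_1,\dots,s_n\}\subseteq S$: pairwise distances $\ge r$, with no point of $S$ adjoinable without destroying this property. Total boundedness of $S$ makes $\{s_1,\dots,s_n\}$ finite and nonempty. Maximality yields $S\subseteq\bigcup_{i=1}^n B(s_i,r)$, since any $x\in S$ with $d(x,s_i)\ge r$ for all $i$ would extend the net. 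Separation yields that the inner balls $B(s_1,r/4),\dots,B(s_n,r/4)$ are pairwise disjoint: a common point of $B(s_i,r/4)$ and $B(s_j,r/4)$ would force $d(s_i,s_j)<r/2<r$.

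Next I would define $\iota:M\to\{0,1,\dots,n\}$ by declaring $\iota(x)$ to be the unique index $i$ with $x\in B(s_i,r/4)$ when $x$ lies in one of the inner balls; otherwise $\iota(x):=\min\{i:x\in B(s_i,r)\}$ if $x\in\bigcup_i B(s_i,r)$; and $\iota(x):=0$ in the remaining case. Put $N_i:=\iota^{-1}(i)$. Each $N_i$ is a finite Boolean combination of open balls, hence Borel, and the $N_i$ partition $M$, which gives (1) and (2). For $i\ge 1$ we have $N_i\subseteq B(s_i,r)$, so $\mbox{diam}(N_i)\le 2r<\delta$, giving (4). The junk piece is $N_0=M\setminus\bigcup_i B(s_i,r)$, which is disjoint from $S$ by the covering property, giving (3). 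Finally, disjointness of the inner balls forces $B(s_i,r/4)\subseteq N_i$ for every $i\ge 1$; since $B(s_i,r/4)$ is open and contains $s_i\in S$, this is precisely condition (5).

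I expect the one genuinely delicate point to be condition (5). A naive greedy disjointification $N_i:=B(s_i,r)\setminus\bigcup_{j<i}B(s_j,r)$ need not satisfy it: a point of $S$ may lie in $N_i$ only as a boundary point of some earlier ball, so that $N_i$ contains no open neighbourhood of any point of $S$, while $N_i\cap S\ne\emptyset$ still forbids merging $N_i$ into $N_0$. The two-scale device above — reserving the pairwise-disjoint inner balls $B(s_i,r/4)$ for the pieces and only disjointifying the leftover shells — is what circumvents this, and the $r$-separation of the net is exactly what makes the reservation conflict-free.
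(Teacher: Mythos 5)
Your proof is correct, and it secures the delicate condition (5) by a different device than the paper. The paper's own proof also starts from a finite cover of $\mbox{Supp}(\mu)$ by small balls centered at support points (an arbitrary finite subcover of $\delta/2$-balls, by compactness), but then disjointifies using Voronoi-type cells: $N_{i+1}$ is built from $\{y\in B(x_{i+1}): d(y,x_{i+1})\le \min_{j\ne i+1} d(y,x_j)\}$ minus the earlier pieces; since the finitely many centers are at positive pairwise distances, each such cell still contains a small ball around its own center (points strictly closer to $x_{i+1}$ than to every other center cannot be absorbed by earlier cells), and that is how the paper obtains (5) without assuming anything about how the centers were chosen. You instead impose structure on the centers up front --- a maximal $\delta/3$-separated net in the support --- so that the quarter-radius balls are pairwise disjoint and can be reserved outright, after which the naive min-index disjointification of the remaining shells is harmless. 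The two mechanisms buy slightly different things: yours makes (5) immediate, and the radius $\delta/3$ gives diameter at most $2\delta/3<\delta$ cleanly (the paper's $\delta/2$-balls only give diameter $\le\delta$, a minor slip relative to the strict inequality demanded in (4)), while the paper's Voronoi rule works for an arbitrary finite subcover at the cost of the short verification just described. Your remark that the plain greedy scheme $B(s_i,r)\setminus\bigcup_{j<i}B(s_j,r)$ can violate (5) is precisely the pitfall both constructions are designed to avoid. You also handle the degenerate case $\mbox{Supp}(\mu)=\emptyset$, which the paper tacitly excludes since it only applies the lemma to probability measures.
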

\begin{proof}
	Let $B(x)$ denote the open ball of center $x$ with radius $\delta/2$. 
	The collection of balls $(B(x):x\in \mbox{Supp}(\mu))$ is an open cover of the compact set $\mbox{Supp}(\mu)$. Therefore it possesses a finite subcover indexed by centers $x_1,\dots,x_n$.  
	Let $N_1 =\{y\in B(x_1):d(y,x_1)\le \min_{j\ne 1} d(y,x_j)\}$. Then $x_1\in N_1$, $N_1$ has nonempty interior, and $x_j\not \in N_1$ for all $j\ne 1$. Continue inductively, letting 
	$$N_{i+1}=\{y\in B(x_{i+1}):d(y,x_{i+1})\le \min_{j\ne i+1} d(y,x_j)\}-\cup_{j\le i}N_i.$$
	Thus, $x_{i+1}\in N_{i+1}$, $N_{i+1}$ has nonempty interior, and $x_j\not \in N_{i+1}$ for all $j\le i+1$. Finally, let $N_0=M- \cup_{j=1}^n N_j$. 
\end{proof}  
\begin{prop}
	\label{pr:approx}
	Let $\mu$ and $\nu$ be two probability measures on the Borel sets of  a compact metric space $(M,d)$, satisfying $\mbox{Supp}(\nu)\subseteq \mbox{Supp}(\mu)$. Then there exists a sequence $(f_k:k\in\N)$ of nonnegative simple functions such that $f_n d\mu$ converges weakly to $\nu$. That is, 
	$$ \lim_{n\to\infty} \int g f_n d\mu = \int g d\nu$$ 
	for every continuous $g$ on $M$. 
\end{prop}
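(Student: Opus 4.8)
The plan is to construct each $f_k$ directly from the partition produced by Lemma \ref{lem:partition}, choosing the coefficients so that the measure $f_k\,d\mu$ reproduces the $\nu$-mass of every cell of the partition.

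First I would fix $k\in\N$ and apply Lemma \ref{lem:partition} with $\delta = 1/k$ to the measure $\mu$, obtaining a Borel partition $N_0,N_1,\dots,N_n$ of $M$ (all depending on $k$) with the five listed properties. Property (5) gives $\mu(N_j)>0$ for $j\ge 1$, since each such $N_j$ contains a neighborhood of a point of $\mbox{Supp}(\mu)$; hence one can define the nonnegative simple function
$$ f_k = \sum_{j=1}^{n} \frac{\nu(N_j)}{\mu(N_j)}\,\mathbf{1}_{N_j}. $$
Property (3), together with the hypothesis $\mbox{Supp}(\nu)\subseteq\mbox{Supp}(\mu)$, gives $N_0 \subseteq M\setminus\mbox{Supp}(\nu)$, hence $\nu(N_0)=0$; so both $f_k\,d\mu$ and $\nu$ vanish on $N_0$ and assign mass $\nu(N_j)$ to each $N_j$, $j\ge 1$.

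Then, given a continuous function $g$ on $M$, I would invoke uniform continuity of $g$ on the compact space $M$: there is a sequence $\epsilon_k(g)\to 0$ with $|g(x)-g(y)|\le \epsilon_k(g)$ whenever $d(x,y)<1/k$. Since each cell $N_j$ with $j\ge 1$ has diameter $<1/k$, approximating $g$ by a constant on each cell shows that both $\int g\,f_k\,d\mu$ and $\int g\,d\nu$ differ from $\sum_{j\ge 1} g(y_j)\nu(N_j)$, for any choice of $y_j\in N_j$, by at most $\epsilon_k(g)\sum_{j\ge 1}\nu(N_j) = \epsilon_k(g)$; therefore $\bigl|\int g\,f_k\,d\mu - \int g\,d\nu\bigr| \le 2\epsilon_k(g)\to 0$. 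As $g$ was an arbitrary continuous function, $f_k\,d\mu\to\nu$ weakly, which is the assertion.

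I do not anticipate a serious obstacle: the real content sits in Lemma \ref{lem:partition}, whose properties (3) and (5) were arranged precisely so that dividing by $\mu(N_j)$ is legitimate and the leftover cell $N_0$ is $\nu$-null. The only things needing care are these two well-definedness points and the routine uniform-continuity estimate, so the write-up should be short.
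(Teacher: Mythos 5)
Your proposal is correct and follows essentially the same route as the paper's own proof: apply Lemma \ref{lem:partition} with $\delta=1/k$, set $f_k=\nu(N_j)/\mu(N_j)$ on $N_j$ (and $0$ on $N_0$), and conclude by uniform continuity of $g$ that $\bigl|\int g f_k\,d\mu-\int g\,d\nu\bigr|\le 2\epsilon_k(g)\to 0$. Your added remarks---that property (5) guarantees $\mu(N_j)>0$ and that property (3) combined with $\mbox{Supp}(\nu)\subseteq\mbox{Supp}(\mu)$ gives $\nu(N_0)=0$---just make explicit two points the paper uses implicitly.
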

\begin{proof} 
	By lemma \ref{lem:partition} for every $k\in\N$ we can find $n=n(k)$ and a partition  $N_0,N_1,\dots,N_n$ of $M$ such that the diameter of $N_1,\dots,N_n$ is $<\frac{1}{k}$, $\nu(N_0)=\mu(N_0)=0$ and $\mu(N_j)>0$ for $j=1,\dots,n$. 
	Let $f_k$ be the simple function equal to $0$ on $N_0$ and to $\nu(N_j)/\mu(N_j)$ on $N_j$ for  $j=1,\dots,n$. Then $\int f_k d \mu =1$. Next fix a continuous function $g$ on $M$. Then by compactness, $g$ is uniformly continuous. Fix $\epsilon>0$, and choose $k$ to such that 
	$|g(x) - g(y)|<\epsilon$ whenever $d(x,y)<\frac{1}{k}$.  Let $N_0,\dots,N_n$ as above, and let $x_1,\dots,x_n$ be arbitrary elements in $N_1,\dots,N_n$, respectively.  Then 
	$$| \int g d\nu - \sum_{i=1}^n g(x_i) \nu(N_i)| <\epsilon$$ 
	and  
	$$ |\int g f_k d\mu - \sum_{i=1}^n g(x_i) \nu(N_i) | <\epsilon,$$ 
	and therefore 
	$$ |\int gf_k d \mu - \int g d\nu|<2\epsilon,$$ 
	completing the proof. 
\end{proof} 
\bibliographystyle{alpha}
\bibliography{Transformed}
\end{document}